\newtheorem{theorem}{Theorem}[section]
\newtheorem{lemma}{Lemma}[section]
\newtheorem{proposition}{Proposition}[section]
\newtheorem{definition}{Definition}[section]
\theoremstyle{remark}
\newtheorem{remark}{Remark}[section]
\def\undertilde#1{\mathord{\vtop{\ialign{##\crcr
$\hfil\displaystyle{#1}\hfil$\crcr\noalign{\kern1.5pt\nointerlineskip}
$\hfil\widetilde{}\hfil$\crcr\noalign{\kern1.5pt}}}}}
\setlist[enumerate,2]{label=\arabic*)} \newcommand*\diff{\mathop{}\!\mathrm{d}}
\newcommand{\R}{\mathbb{R}}
\newcommand{\eps}{\epsilon}
\newcommand{\N}{\mathbb{N}}
\newcommand{\ind}{\mathds{1}}
\newcommand{\aplim}{\textnormal{ap }\lim}
\DeclarePairedDelimiter{\ceil}{\lceil}{\rceil}
\newcommand{\statespace}{\mathcal{V}_0}
\newcommand{\weak}{\mathcal{S}_{\text{weak}}}
\title[weak-BV stability for isothermal gas dynamics]{Uniqueness \& Weak-BV stability in the large for isothermal gas dynamics}
\author{Jeffrey Cheng}
\address{Department of Mathematics, The University of Texas at Austin, Austin, TX 78712.}
\email{jeffrey.cheng@utexas.edu}
\date{\today}
\thanks{2010 \textit{Mathematics Subject Classification}. 35L65, 35B35, 35L45, 76N15}
\thanks{\textit{Key words and phrases}. Isothermal gas dynamics, Total variation diminishing, Front tracking, Stability, Conservation law, Relative entropy.}
\thanks{\textbf{Acknowledgements}: The author would like to thank Sam Krupa \& Alexis Vasseur for the suggestion of the problem, and Cooper Faile for the short proof of Lemma \ref{productlemma}. This work was partially supported by NSF grants: DMS-1840314 \& DMS-2306852 and a joint NSF-ESPRC grant: DMS-EPSRC-2219434}
\begin{document}
\emergencystretch 3em

\begin{abstract}
For the $1$-d isothermal Euler system, we consider the family of entropic BV solutions with possibly large, but finite, total variation. We show that these solutions are stable with respect to large perturbations in a class of weak solutions to the system which may not even be BV. The method is based on the construction of a modified front tracking algorithm, in which the theory of $a$-contraction with shifts for shocks is used as a building block. The main contribution is to construct the weight in the modified front tracking algorithm in a large-BV setting.
\end{abstract}
\maketitle 

\tableofcontents
\section{Introduction \& main results}
In this paper, we consider the $1-$d isothermal Euler system (i.e. isentropic Euler with $\gamma=1$)
\begin{align}\label{eulerian}
\begin{cases}
\partial_t \rho+\partial_y(\rho v)=0, & (t,y) \in \R^+ \times \R,\\
\partial_t (\rho v)+\partial_y(\rho v^2+P(\rho))=0, & (t,y) \in \R^+ \times \R, \\
\end{cases}
\end{align}
where $\rho$ is the density, $v$ is the velocity, and $P(\rho)=\rho$. If one makes a change of spatial variables to the Lagrangian mass coordinate $x=\int_{y(t)}^y\rho(s,t)\diff s$, where $y(t)$ is a well-defined particle path with $y'(t)=v(y(t),t)$, one obtains the $p$-system
\begin{align}\label{isothermal}
\begin{cases}
\partial_t \tau-\partial_xv=0, & (t,x) \in \R^+ \times \R,\\
\partial_t v+\partial_xp(\tau)=0, & (t,x) \in \R^+ \times \R,\\
\end{cases}
\end{align}
where $\tau=\frac{1}{\rho}$ is the specific volume and $p(\tau)=\frac{1}{\tau}$. The two formulations are equivalent for bounded weak solutions bounded away from vacuum \cite{MR885816}. Both systems \eqref{eulerian}\eqref{isothermal} are special cases of $1-$d hyperbolic systems of conservation laws
\begin{align}\label{system}
\partial_tu+\partial_xf(u)=0,
\end{align}
endowed with a strictly convex entropy $\eta$ and entropy flux $q$ which obey the functional equation $\nabla q=\nabla \eta \nabla f$. 
Specifically, for \eqref{eulerian} we have
\begin{align}
\begin{cases}\label{eulerianentropystructure}
\tilde{\eta}(\rho,v)=\frac{\rho v^2}{2}+\rho \ln(\rho)-\rho, \\
\tilde{q}(\tau, v)=\frac{\rho v^3}{2}+\rho v \ln(\rho),
\end{cases}
\end{align}
while for \eqref{isothermal} we have
\begin{align}
\begin{cases}\label{entropystructure}
\eta(\tau,v)=\frac{v^2}{2}-\ln(\tau), \\
q(\tau, v)=p(\tau)v=\frac{v}{\tau}.
\end{cases}
\end{align}
In this article, we will work primarily on the $p$-system and our main result will be stated for \eqref{isothermal}. We assume $u=(\tau,v) \in \statespace \subset \R^2$, where $\statespace$ is as follows
\begin{align*}
    \statespace=\{u=(\tau,v) \in \R^+ \times \R: |v|\leq \hat{M}, \hat{c} \leq \tau \leq \hat{C} \},
\end{align*}
for some $\hat{M},\hat{C},\hat{c} > 0$. 
The system \eqref{isothermal} has eigenvalues
\begin{align}\label{evs}
\lambda_{1,2}(u)=\mp \frac{1}{\tau}.
\end{align}
We will consider only entropic solutions of \eqref{isothermal}, i.e. solutions $u$ which satisfy additionally
\begin{align}\label{entropyinequality}
\partial_t\eta(u)+\partial_x q(u) \leq 0, \indent (t,x) \in \R^+ \times \R,
\end{align}
with respect to the entropy \eqref{entropystructure}. Without ambiguity, we may also refer to solutions to $\eqref{eulerian}$ verifying \eqref{entropyinequality}, by which we mean solutions that verify \eqref{entropyinequality} with respect to the entropy \eqref{eulerianentropystructure}. We also restrict ourselves to solutions satisfying the Strong Trace property.
\begin{definition}\label{strongtrace}
Let $u \in L^\infty(\R^+ \times \R)$. We say that $u$ verifies the Strong Trace property if for any Lipschitzian curve $t \mapsto h(t)$, there exists two bounded functions $u_-, u_+ \in L^\infty(\R^+)$ such that for any $T > 0$:
\begin{align*}
&\lim_{n\to \infty}\int_0^T\sup_{y \in (0,\frac{1}{n})}|u(t,h(t)+y)-u_+(t)|\diff t \\
&=\lim_{n\to \infty}\int_0^T\sup_{y \in (-\frac{1}{n},0)}|u(t,h(t)+y)-u_-(t)|\diff t=0.
\end{align*}
\end{definition}
With this in mind, we may define the largest space of solutions we consider in this paper.
\begin{align*}
\weak:=\{u \in L^\infty(\R^+ \times \R;\statespace)|\text{ weak solution to \eqref{isothermal}\eqref{entropyinequality}, satisfying Definition \ref{strongtrace}}\}.
\end{align*}
Note that any BV function satisfies Definition \ref{strongtrace}. Thus, any BV solution to \eqref{isothermal}\eqref{entropyinequality} valued in $\statespace$ belongs to the class $\weak$. The aim of the paper is to show the stability of BV solutions in the larger class $\weak$. Our main result is the following theorem.
\begin{theorem}\label{main}
Let $u=(\tau,v) \in \weak \cap L^\infty(\R^+;BV(\R))$ be an entropic BV solution to \eqref{isothermal}\eqref{entropyinequality} with initial data $u^0=(\tau^0,v^0)$. Let $u_n=(\tau_n,v_n) \in \weak$ be a sequence of wild solutions with initial values $u_n^0=(\tau_n^0, v_n^0)$. If $u_n^0$ converges to $u^0$ in $L^2(\R)$, then for any $T,R>0$, $u_n$ converges to $u$ in $L^\infty([0,T];L^2([-R,R]))$. In particular, $u$ is the unique function in the class $\weak$ with initial data $u^0$.
\end{theorem}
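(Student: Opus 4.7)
The plan is to compare each wild solution $u_n$ directly to a front-tracking approximation $u^\nu$ of the BV solution $u$ via a weighted relative-entropy functional, using the $a$-contraction with shifts machinery, and then to pass to the limit in the front-tracking parameter $\nu$.

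First I would build, for every $\nu > 0$, a front-tracking approximation $u^\nu$ of the entropic BV solution $u$, consisting of finitely many shock, rarefaction, and non-physical fronts, such that $u^\nu \to u$ in $L^2_{\mathrm{loc}}$ as $\nu \to 0$, with a total variation bound that is uniform in $\nu$. Such a scheme is available globally in large BV for \eqref{isothermal} because the system admits the natural Riemann coordinates $w_\pm = v \pm \ln\tau$ in which the Glimm-type interaction functional is well behaved. To each such $u^\nu$ I would attach a piecewise-constant weight $a^\nu(t,x)$ which jumps only across fronts of $u^\nu$; the jump across each shock is calibrated so that the single-shock $a$-contraction inequality, applied to the pair $(u_n, u^\nu)$ after an appropriate shift of the shock position, produces a non-positive contribution from that shock.

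Next, I would introduce artificial shifts $h_i(t)$ for each shock front of $u^\nu$ and work in the shifted frame, using the Strong Trace property (Definition \ref{strongtrace}) of $u_n$ to make sense of the left and right traces of $u_n$ along each shifted curve. The core computation is then to differentiate in time
\[
E^\nu(t) = \int_\R a^\nu(t,x)\,\eta(u_n(t,x) \mid u^\nu(t,x))\,dx,
\]
where $\eta(\cdot \mid \cdot)$ denotes the relative entropy associated with \eqref{entropystructure}, and to verify that shock contributions are non-positive by the shifted $a$-contraction lemma, while contributions across rarefaction and non-physical fronts are bounded by a term proportional to $\nu$ times the total wave strength. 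Adding these front-by-front bounds and using the entropy inequality \eqref{entropyinequality} for $u_n$ then gives $E^\nu(T) \leq E^\nu(0) + C(T)\,\nu$. Since $u_n$ and $u^\nu$ both take values in $\statespace$, on which $\eta$ is uniformly strictly convex, and since the weight $a^\nu$ will be uniformly bounded above and below, $E^\nu$ controls $\|u_n - u^\nu\|_{L^2([-R,R])}^2$. The hypothesis $u_n^0 \to u^0$ in $L^2$, together with $u^\nu \to u$, then yields the claim after sending $n \to \infty$ and $\nu \to 0$.

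The main obstacle I expect is the construction and the uniform two-sided control of the weight $a^\nu$ in the large-BV regime. The natural definition of $a^\nu$ is as a product, over all shocks of $u^\nu$, of individual multiplicative factors dictated by the single-shock $a$-contraction theory; in small BV these factors are close to $1$ and the product converges, but in large BV they are not individually small, and the product is a priori neither bounded above nor below uniformly in $\nu$ or in the number of fronts. Controlling this product through arbitrarily many wave interactions, and showing that its logarithm is dominated by a decreasing Glimm-type interaction functional so that $a^\nu$ remains trapped in a fixed compact subinterval of $(0,\infty)$, is the heart of the argument; the logarithmic structure specific to $p(\tau)=1/\tau$ should make certain interaction estimates globally rather than only locally valid, and the short product lemma credited in the acknowledgements presumably supplies the key combinatorial inequality. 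Once the weight bound is secured, the remainder of the argument follows the now-standard relative-entropy-with-shifts template.
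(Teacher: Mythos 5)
Your overall architecture (modified front tracking for the BV solution, a piecewise-constant weight jumping across shocks, $a$-contraction with shifts against the wild solution, rarefaction errors of order the front-tracking parameter, and uniform two-sided control of the weight in large BV as the central difficulty) is the same as the paper's, and you have correctly identified where the real work lies. But there is a genuine gap in your endgame, in the step where you claim that the Gronwall-type inequality $E^\nu(T)\leq E^\nu(0)+C(T)\nu$ lets you send $n\to\infty$ and $\nu\to 0$ to conclude $u_n\to u$.

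The problem is that the shifts $h_i(t)$ depend on the wild solution $u_n$, so once you ``work in the shifted frame'' the object you are comparing $u_n$ to is no longer $u^\nu$: it is a shifted pattern $\psi_{n,\nu}$ which is not a weak solution of \eqref{isothermal} (the shifted shocks do not satisfy Rankine--Hugoniot) and which has no a priori reason to converge to $u$ as $\nu\to 0$ --- the shifts can move the fronts by an amount that is not controlled by $\nu$. The dissipation functional in the shock stability proposition is non-positive only along the shifted curves, so the inequality you actually obtain is $\int a\,\eta(u_n|\psi_{n,\nu})\,dx\leq(\text{initial data})+C\nu$, not the corresponding statement with $u^\nu$ in place of $\psi_{n,\nu}$. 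To close the argument you need an additional mechanism to identify the limit: the paper keeps only uniform $BV$ and Lipschitz-in-time-in-$L^1$ bounds on the shifted patterns, extracts a limit $\psi$ by a compactness theorem (Theorem \ref{compactness}), observes that $\psi$ is simultaneously the strong $L^2$ limit of the $u_n$ --- hence an entropic weak solution valued in $\statespace$ with a uniform $BV$ bound --- and then invokes the uniqueness theorem for $BV$ entropic solutions of \eqref{isothermal} (Theorem \ref{uniqueness}, the Standard Riemann Semigroup theory) to conclude $\psi=u$. Without this compactness-plus-$L^1$-uniqueness step, your argument only shows that $u_n$ converges to \emph{some} $BV$ entropic solution with data $u^0$, not to $u$ itself. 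Two smaller points: for this $2\times2$ system non-physical fronts are unnecessary; and the shifted patterns need not remain in $\statespace$ (no maximum principle survives the shifting), so the uniform convexity of $\eta$ must be invoked on a larger compact set determined by the $BV$ and $L^\infty$ norms of $u^0$.
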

As a corollary, due to the equivalence of the Eulerian and Lagrangian formulations \cite{MR885816} we obtain the following theorem in the Eulerian frame. Define the state space and solution spaces analogously:
\begin{align}
&\statespace^E:=\{(\rho, \rho v): |v| \leq \tilde{M}, \tilde{c} \leq \rho \leq \tilde{C} \text{ for some $\tilde{M}, \tilde{c}, \tilde{C} > 0$}\}, \\
&\weak^E:=\{u \in L^\infty(\R^+ \times \R;\statespace^E)|\text{ weak solution to \eqref{eulerian}\eqref{entropyinequality}, satisfying Definition \ref{strongtrace}}\}.
\end{align}
Then, we have
\begin{theorem}\label{maineulerian}
Let $U=(\rho, \rho v) \in \weak^E \cap L^\infty(\R^+; BV(\R))$ be an entropic BV solution to \eqref{eulerian}\eqref{entropyinequality} with initial data $U^0=(\rho^0, \rho^0v^0)$. Let $U_n=(\rho_n,\rho_nv_n) \in \weak^E$ be a sequence of wild solutions with initial values $U^0_n=(\rho^0_n, \rho^0_nv^0_n)$. If $U^0_n$ converges to $U^0$ in $L^2(\R)$, then for any $T,R>0$, $U_n$ converges to $U$ in $L^\infty([0,T];L^2([-R,R]))$. In particular, $U$ is unique in the class $\weak^E$. 
\end{theorem}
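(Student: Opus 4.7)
The plan is to deduce this statement from Theorem \ref{main} by transporting every object through the equivalence \cite{MR885816} between bounded entropic weak solutions of \eqref{eulerian}--\eqref{entropyinequality} bounded away from vacuum and bounded entropic weak solutions of \eqref{isothermal}--\eqref{entropyinequality}. Every element of $\weak^E$ takes values in $\statespace^E$, which guarantees $\tilde c \le \rho \le \tilde C$ and $|v| \le \tilde M$, so the equivalence preserves boundedness, the Strong Trace property, the entropy inequality, and the BV property. It therefore produces $u \in \weak \cap L^\infty_t BV_x$ associated with $U$ and a sequence $u_n \in \weak$ associated with $U_n$, all valued in a common Lagrangian state space $\statespace$ whose constants depend only on $\tilde c, \tilde C, \tilde M$.

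To line up the different Lagrangian frames, I would fix $T, R > 0$ and exploit finite speed of propagation for \eqref{eulerian} to choose an anchor $y_0 \ll -R$, depending only on $\tilde M$, $T$, and $R$, such that every particle path through $(0, y_0)$ for any solution in $\weak^E$ remains outside $[-R, R]$ on $[0, T]$. Using $y_0$ as the common base point, define $x_n(t, y) = \int_{y_n(t)}^y \rho_n(t, s)\diff s$ with $y_n'(t) = v_n(t, y_n(t))$, $y_n(0) = y_0$, and similarly $x(t, y)$ for $U$. Each map $y \mapsto x_n(t, y)$ is bi-Lipschitz with constants depending only on $\tilde c, \tilde C$. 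Since $\rho_n^0 \to \rho^0$ and $(\rho_n v_n)^0 \to (\rho v)^0$ in $L^2(\R)$ while the Jacobian is uniformly bounded, a change of variables combined with the estimate
\begin{equation*}
|y_n^0(x) - y^0(x)| \le \tilde c^{-3/2}\,\|\rho_n^0 - \rho^0\|_{L^2(\R)}\,|x|^{1/2},
\end{equation*}
where $y_n^0$ and $y^0$ invert $x_n(0, \cdot)$ and $x(0, \cdot)$, yields $u_n^0 \to u^0$ in $L^2_{\rm loc}(\R)$; the BV regularity of $U^0$ is used only to handle compositions with the locally uniformly convergent flow $y_n^0 \to y^0$.

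Theorem \ref{main} then gives $u_n \to u$ in $L^\infty([0, T]; L^2([X_-, X_+]))$ for any bounded Lagrangian window. Choosing $[X_-, X_+]$ so that it contains $x_n(t, [-R, R])$ and $x(t, [-R, R])$ uniformly in $n$ and $t \in [0, T]$, I would rewrite $\int_{-R}^R |U_n - U|^2 \diff y$ as a Lagrangian integral and split
\begin{equation*}
(U_n - U)\circ y_n(t, \cdot) = \bigl(U_n \circ y_n - U \circ y_n\bigr) + \bigl(U \circ y_n - U \circ y\bigr),
\end{equation*}
controlling the first summand by the Lagrangian convergence just obtained and the second via the BV regularity of $U$ together with uniform convergence $y_n(t, \cdot) \to y(t, \cdot)$, which itself follows from $\partial_x y_n = \tau_n$ and $\tau_n \to \tau$ in $L^\infty_t L^2_{{\rm loc},x}$. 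The main obstacle is precisely this last bookkeeping step: because the flow maps $y_n$ and $y$ differ, one cannot directly identify the Eulerian difference $U_n - U$ with the Lagrangian difference $u_n - u$ under a single change of variables, and the conversion of norms must instead be routed through the splitting above and quantitative control on $y_n - y$. Uniqueness in $\weak^E$ with given initial data is then immediate by taking $U_n \equiv U$.
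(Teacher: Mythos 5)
Your strategy is genuinely different from the paper's. The paper does \emph{not} apply Theorem \ref{main} in the Lagrangian frame and then transfer the conclusion: it applies Proposition \ref{mainprop} to each $u_n$ separately, pushes the front-tracking approximant $\psi_n$ forward to the Eulerian frame as $\zeta_n(t,y)=\psi_n(t,\phi_n(t,y))$ using \emph{that solution's own} coordinate change, converts the weighted relative-entropy estimate via Proposition 6.1 of \cite{MR3322780}, and then runs the compactness argument (Theorem \ref{compactness}, with Lemma \ref{finitespeed} supplying the uniform Lipschitz-in-time bound for $\zeta_n$) entirely in the Eulerian frame, identifying the limit with $U$ by Theorem \ref{uniqueness}. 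This is engineered precisely to avoid the step that dominates your proof: comparing the distinct Lagrangian flows $y_n$ and $y$ and controlling compositions of $U$ with two different changes of variables. Your route is more direct conceptually but buys you that extra analytic burden.

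Within your route there are concrete gaps to repair. First, the locally uniform convergence $y_n(t,\cdot)\to y(t,\cdot)$ does not follow from $\partial_x y_n=\tau_n$ and $\tau_n\to\tau$ in $L^\infty_tL^2_{\mathrm{loc},x}$ alone: convergence of the spatial derivatives determines $y_n(t,\cdot)$ only up to a $t$-dependent additive constant, and the anchor $y_n(t,0)$ solves $\dot y_n=v_n(t,0)$, a trace of an $L^2$-convergent sequence which need not converge pointwise. You must pin the constant down, e.g.\ by averaging the identity $y_n(t,x)-y_n(0,x)=\int_0^t v_n(s,x)\diff s$ over a spatial window and using $v_n\to v$ in $L^\infty_tL^2_{\mathrm{loc}}$ together with the $t=0$ convergence. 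Second, your splitting inserts the wrong intermediate term: the Lagrangian convergence $u_n\to u$ controls $U_n\circ y_n-U\circ y$ (each composed with its own flow), not $U_n\circ y_n-U\circ y_n$; you should write $(U_n-U)\circ y_n=\bigl(U_n\circ y_n-U\circ y\bigr)+\bigl(U\circ y-U\circ y_n\bigr)$, with the second term handled by the BV bound on $U(t,\cdot)$ and the uniform convergence of the flows. Third, since $|x_n(0,y)-x(0,y)|\leq\|\rho_n^0-\rho^0\|_{L^2}|y-y_0|^{1/2}$ grows at infinity, you only obtain $u_n^0\to u^0$ in $L^2_{\mathrm{loc}}$, not in $L^2(\R)$ as Theorem \ref{main} literally requires; this is harmless only because the proof via Proposition \ref{mainprop} is localized to a cone of information, and you should say so explicitly. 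Finally, the uniqueness conclusion should take $U_n\equiv\tilde U$ for an arbitrary $\tilde U\in\weak^E$ with the same initial data, not $U_n\equiv U$.
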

Note that there is no hypothesis in Theorem \ref{main} that the perturbations $u_n$ be BV. Only the solution $u$ need be well behaved. The current state-of-the-art uniqueness theorem for the isothermal Euler system gives uniqueness in the class of entropic BV solutions (see Theorem \ref{uniqueness}). Theorem \ref{main} extends this to the wider class of entropic solutions verifying Definition \ref{strongtrace}.

\par The study of the $1-$d Euler equations dates back to Riemann, who studied the eponymous ``Riemann problem'' \cite{MR1630338}. Since then, the existence and uniqueness theory for $1$-d hyperbolic systems has been greatly developed. Glimm proved small-BV existence for general hyperbolic systems via a random choice algorithm \cite{MR194770}. The front tracking scheme was developed by Dafermos \cite{MR303068} in the scalar setting and has been extended to small-BV existence for $n \times n$ systems \cite{MR1816648}, \cite{MR3468916}. Small-BV existence was also shown via the vanishing viscosity method in the seminal paper of Bianchini \& Bressan \cite{MR2150387}. Initially, uniqueness was proven in the class of BV functions under various regularity conditions \cite{MR1701818}, \cite{MR1489317}, \cite{MR1757395} (for a comprehensive account, see \cite{MR1816648}). For systems endowed with a strictly convex entropy, all of these conditions were recently removed (for $2 \times 2$ systems in \cite{MR4487515}, and for $n \times n$ systems in \cite{MR4690554}, \cite{MR4661213}).
\par In regards to stability, the $L^1$-stability of small-BV weak solutions was established in the 90s \cite{MR1686652}, \cite{MR1723032}. These theories require that the perturbations still have small-BV. In contrast, an $L^2$-stability theory can handle large perturbations which may not even be BV. The first results in this line of work were by Dafermos \cite{MR546634} \& DiPerna \cite{MR523630}, who showed that for systems endowed with a strictly convex entropy, Lipschitz solutions are ``very'' stable in the following sense. For any $T > 0$, consider the classes of weak solutions:
\begin{align*}
&\mathcal{S}^T_{\text{reg}}=\{u \in L^\infty(\R \times [0,T];\mathcal{V}_0), \text{ solution to \eqref{system}, with $||\partial_xu(t)||_{L^\infty(\R)} \leq C$ for some $C > 0$}\}, \\
&\mathcal{S}^T_{\text{weak}}=\{u \in L^\infty(\R \times [0,T];\mathcal{V}_0), \text{ solution to \eqref{system}\eqref{entropyinequality}}\}.
\end{align*}
Let $u$ be a solution in $\mathcal{S}^T_{\text{reg}}$ with initial data $u^0$. The results of Dafermos \& DiPerna imply that if $\{u_n\}_{n \in \N}$ is a sequence of solutions in $\mathcal{S}^T_{\text{weak}}$ such that their initial values $\{u_n^0\}_{n \in \N}$ converge to $u^0$ in $L^2(\R)$, then $\{u_n\}_{n \in \N}$ converges to $u$ in $L^\infty([0,T];L^2(\R))$. This grants the ``weak-strong'' stability principle; strong (Lipschitz) solutions are stable in the much wider class of weak solutions $\mathcal{S}^T_{\text{weak}}$. 
\par A recent line of work has extended the program of Dafermos \& DiPerna to discontinuous solutions. The first result was to show $L^2$-stability for shocks of scalar conservation laws \cite{MR2771666}. This was extended to the system setting (for extremal shocks) by Kang \& Vasseur \cite{MR3519973}. Using this result as a building block, the ``weak-strong'' stability was recently extended to a ``weak-small-BV'' stability principle for general $2 \times 2$ systems by Chen, Krupa, \& Vasseur \cite{MR4487515}. It says that small-BV weak solutions are stable in the larger class of weak solutions $\weak$. The strategy of the proof is to construct a modified front tracking algorithm alongside an appropriate weight function.
\par The present paper is concerned with extending the ``weak-small-BV'' principle to a ``weak-BV'' principle for the isothermal Euler system. The isothermal Euler system is special in the sense that discretization schemes, such as that of Glimm or the front tracking algorithm, are well-posed for large-BV initial data. This is essentially because the system is endowed with a ``total variation decreasing'' (TVD) field, a scalar field whose variation is non-increasing along solutions. Systems with TVD fields were studied by Bakhvalov \cite{MR279443}, but are somewhat limited in scope (see \cite{MR3040678} for a discussion of this. In particular, the isentropic Euler system does not have a TVD field for $\gamma > 1$). The large-BV existence was first shown by Nishida for the Glimm scheme \cite{MR236526}, and by Asakura for the front tracking algorithm \cite{MR2126567} (see also \cite{MR1359913} for an extension to the isothermal Euler-Poisson system). We show that such constructions can be modified so that the strategy carried out in \cite{MR4487515} can be extended to the large-BV setting. This is partially motivated by the recent execution of this extension in the scalar setting \cite{concaveconvex}. 
\par This work is also motivated by the construction of $L^\infty$ solutions for $2 \times 2$ systems, including the isothermal Euler system, via the compensated compactness theory \cite{MR584398} (see \cite{MR719807}, \cite{MR924671}, \cite{MR922139}, \cite{MR1284790}, \cite{MR1383202} for $\gamma \neq 1$, and \cite{MR1970885} for $\gamma=1$). Theorem \ref{main} says that BV solutions bounded away from vacuum are $L^2$-stable in the class $\weak$, which includes the solutions constructed via compensated compactness (as long as they verify Definition \ref{strongtrace}).
\par The paper is structured as follows. In Section \ref{prelim}, we give some uniqueness and compactness theorems with origins in the $L^1$-theory for later use. Then, we give some specific details on the structure of the TVD field and shock \& rarefaction curves for \eqref{isothermal}. In Section \ref{strat}, we give the main proposition, Proposition \ref{mainprop}, and show how Theorem \ref{main} follows from Proposition \ref{mainprop}. The rest of the paper is aimed towards proving Proposition \ref{mainprop}. In Section \ref{riemannentropy}, we summarize $L^2$-stability results for single waves from \cite{MR4667839}, \cite{MR4487515}. The modified front tracking algorithm is constructed in Section \ref{mfts}, and the weight is constructed in Section \ref{weight}. Finally, in Section \ref{final}, we prove Proposition \ref{mainprop}.

\section{Preliminaries}\label{prelim}
For use later, we record here a uniqueness theorem and a compactness theorem originating in the $L^1$-theory. The first result is the synthesis of two results in the literature, rephrased in our context, which establish the existence and uniqueness of the ``Standard Riemann Semigroup'' for the isothermal Euler system.
\begin{theorem}[Synthesis of main theorems from \cite{MR1641709}, \cite{MR4690554}]\label{uniqueness}
Fix $M \geq 0$. Then, there exists only one solution $u:\R^+ \times \R \to \statespace$ to \eqref{isothermal}\eqref{entropyinequality} (or equivalently, \eqref{eulerian}\eqref{entropyinequality}) such that $||u(t)||_{BV(\R)} \leq M$ for all $t \geq 0$.
\end{theorem}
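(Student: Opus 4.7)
The plan is a two-step reduction: first construct a Standard Riemann Semigroup (SRS) on a domain of large-BV data for \eqref{isothermal}, then show that every candidate entropic BV solution with the uniform BV bound agrees with the SRS trajectory issuing from its initial datum.

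For the existence half, I would invoke large-BV well-posedness for the isothermal Euler system via either Nishida's extension of the Glimm scheme \cite{MR236526} or Asakura's large-BV front tracking \cite{MR2126567}. The feature that makes the large-BV regime tractable here is the existence of a Riemann-coordinate field whose variation is non-increasing along approximate solutions, giving uniform-in-time BV bounds. Standard interaction estimates together with Bressan-type contractivity and compactness arguments (as collected in \cite{MR1641709}) then produce an $L^1$-Lipschitz semigroup $S : [0,\infty) \times \mathcal{D}_M \to \mathcal{D}_M$ on a domain of BV functions valued in $\statespace$ with total variation bounded by $M$, whose trajectories are entropic weak solutions to \eqref{isothermal}. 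Two trajectories of the SRS with the same initial data coincide by construction.

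For the uniqueness half, I would appeal to the recent uniqueness theorem for $n \times n$ hyperbolic systems endowed with a strictly convex entropy \cite{MR4690554}. The isothermal system \eqref{isothermal} carries the strictly convex entropy $\eta(\tau,v)=v^2/2-\ln\tau$ of \eqref{entropystructure} on $\statespace$ (strict convexity uses $\tau\geq\hat c>0$), so the hypotheses of that result are satisfied. Its conclusion removes all tame-oscillation and tame-variation assumptions and shows that any entropic BV weak solution $u$ with $\|u(t)\|_{BV(\R)}\leq M$ is uniquely determined by its trace at $t=0$; in particular $u(t,\cdot)=S_t u^0$ for all $t\geq 0$. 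Uniqueness in the class described by the theorem follows, and the Eulerian version is obtained from \cite{MR885816} since our solutions are bounded and bounded away from vacuum.

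The main obstacle is to verify compatibility of the two ingredients in the large-BV regime: the uniqueness result \cite{MR4690554} compares a candidate solution with the SRS through local integral inequalities generated by the strictly convex entropy, and one has to confirm that the constants in these comparisons stay uniform under only the a priori bound $\|u(t)\|_{BV}\leq M$. For the Lagrangian isothermal system this is natural because the eigenvalues \eqref{evs} and the Riemann coordinates are controlled purely in terms of $\hat c,\hat C,\hat M$, so the local comparison estimates used in \cite{MR4690554} remain effective even for large total variation, closing the argument.
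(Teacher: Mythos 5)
Your proposal is correct and follows essentially the same route as the paper: Theorem \ref{uniqueness} is stated there precisely as a synthesis of the Standard Riemann Semigroup existence theory of \cite{MR1641709} (built on the large-BV Nishida/Asakura constructions) with the BV uniqueness theorem of \cite{MR4690554}, exactly as you describe. The only point the paper adds beyond this is the remark that those references work in an $L^1$ framework while here one deals with $L^\infty$ data of finite total variation, which causes no difficulty since the semigroup remains $L^1$-Lipschitz.
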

\begin{remark}
The theorems of \cite{MR1641709}, \cite{MR4690554} are stated in an $L^1$ framework, but trajectories of limits of front tracking approximations may be defined for functions simply with finite total variation (and the solution operator remains Lipschitz w.r.t. the $L^1$ distance) and so the uniqueness theory works with $L^\infty$ solutions without issue.
\end{remark}
The next theorem is a compactness theorem which we will need to extract a convergent subsequence from the sequence of piecewise constant functions generated by our modified front tracking scheme. It was adapted from the classical scheme in \cite{MR1816648} to the modified scheme in \cite{MR4487515}.
\begin{theorem}[Lemma 2.6 in \cite{MR4487515}]\label{compactness}
Let $\{\psi_n\}_{n \in \N}$ be a family of piecewise constant functions uniformly bounded in $L^\infty(\R^+;BV(\R))$. Assume that there exists $C>0$ such that for every $n \in \N$
\begin{align}\label{lipintime}
||\psi_n(t,\cdot)-\psi_n(s,\cdot)||_{L^1(\R)} \leq C|t-s|, \indent 0<s<t<T.
\end{align}
Then, there exists $\psi \in L^\infty(\R^+ \times \R)$ such that, up to a subsequence, $\psi_n$ converges to $\psi$ as $n \to \infty$ in $C([0,T];L^2([-R,R]))$ for every $T,R>0$, and almost everywhere in $\R^+ \times \R$. 
\end{theorem}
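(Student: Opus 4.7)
The plan is to reduce this to a classical Helly/Arzel\`a--Ascoli selection. The hypotheses provide two compatible forms of compactness: the uniform $L^\infty(\R^+; BV(\R))$ bound gives pointwise-in-time precompactness in $L^1_{\mathrm{loc}}(\R)$ via Helly's selection theorem, while the $L^1$-Lipschitz estimate \eqref{lipintime} gives equicontinuity in $t$ with values in $L^1(\R)$, uniformly in $n$. Together these produce a subsequence converging in $C([0,T]; L^1_{\mathrm{loc}}(\R))$, and the uniform $L^\infty$ bound then upgrades this to convergence in $C([0,T]; L^2([-R,R]))$.

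First I fix a countable dense set $\{t_k\}_{k \in \N} \subset \R^+$. For each $t_k$, the family $\{\psi_n(t_k, \cdot)\}_n$ has uniformly bounded total variation and uniformly bounded $L^\infty$ norm, so Helly's theorem yields an $L^1_{\mathrm{loc}}$-convergent subsequence. A standard diagonal extraction produces a single subsequence (still denoted $\psi_n$) such that $\psi_n(t_k, \cdot) \to \psi(t_k, \cdot)$ in $L^1_{\mathrm{loc}}(\R)$ for every $k$. To extend $\psi$ to all $t$ and obtain uniform-in-$t$ convergence, I run the standard $\eps/3$ argument: given $t \in [0,T]$ and $\eps > 0$, choose $t_k$ with $|t - t_k| \leq \eps/(3C)$; the bound \eqref{lipintime} controls $\|\psi_n(t,\cdot) - \psi_n(t_k,\cdot)\|_{L^1}$ and $\|\psi_m(t,\cdot) - \psi_m(t_k,\cdot)\|_{L^1}$ each by $\eps/3$ uniformly in $n,m$, so $\{\psi_n(t, \cdot)\}_n$ is Cauchy in $L^1_{\mathrm{loc}}$ and converges to some $\psi(t, \cdot)$. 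Passing to the limit in \eqref{lipintime} shows $\psi$ itself is $L^1$-Lipschitz in $t$, and the same $\eps/3$ argument then promotes the pointwise-in-$t$ convergence to uniform convergence on $[0,T]$.

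To upgrade the mode of convergence, I use the interpolation
\[
\|\psi_n(t,\cdot) - \psi(t,\cdot)\|_{L^2([-R,R])}^2 \leq \|\psi_n(t,\cdot) - \psi(t,\cdot)\|_{L^\infty(\R)} \, \|\psi_n(t,\cdot) - \psi(t,\cdot)\|_{L^1([-R,R])},
\]
whose right-hand side is controlled by the uniform $L^\infty$ bound times the $L^1_{\mathrm{loc}}$ convergence just established. This delivers convergence in $C([0,T]; L^2([-R,R]))$ for the given $T,R$, and a final diagonal extraction over an exhaustion $T_j, R_j \to \infty$ produces a single subsequence valid for every $T, R > 0$. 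Almost everywhere convergence on $\R^+ \times \R$ then follows from a further subsequence extraction combined with Fubini. The only real obstacle is bookkeeping the nested diagonal arguments so that one subsequence works simultaneously for all $T, R$; everything else is a routine application of Helly's theorem and the equicontinuity provided by \eqref{lipintime}.
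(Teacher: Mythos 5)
Your argument is correct and is essentially the standard Helly-plus-equicontinuity proof that the paper imports by citation from Lemma 2.6 of \cite{MR4487515}: diagonal extraction over a countable dense set of times via Helly's selection theorem, the $\eps/3$ argument with a finite temporal net using \eqref{lipintime} to get uniform-in-$t$ convergence in $L^1_{\mathrm{loc}}$, and the interpolation $\|f\|_{L^2}^2 \leq \|f\|_{L^\infty}\|f\|_{L^1}$ together with the uniform $L^\infty$ bound to upgrade to $C([0,T];L^2([-R,R]))$. No gaps.
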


\subsection{The shock \& rarefaction curves and the TVD field}
Here, we give a recount of some useful structural results for $\eqref{isothermal}$, following \cite{MR1942468}. Firstly, we give the structure of the $i$-wave curves. Let $w=-\ln(\tau)$. Define a function:
\begin{align*}
W(s)=\begin{cases}
s & s \leq 0, \\
2\sinh(\frac{s}{2}) & s \geq 0. \\
\end{cases}
\end{align*}
Then, the equations for the $i$-wave curves passing through a point $(w_l, v_l)$ corresponding to a left state $u_l$, where $w_l=-\ln(\tau_l)$, are:
\begin{align}\label{wavecurves}
\text{$i$-wave curve:} \indent \sigma \mapsto T_i(\sigma)((w_l,v_l)):=(w_l+(-1)^{i-1}\sigma, -W(\sigma)+v_l), \indent i=1,2.
\end{align}
A right state $u_r=(\tau_r,v_r)$ such that $(w_r,v_r)=T_i(\sigma)(w_l,v_l)$ for may be connected to $u_l$ via an $i$-rarefaction if $\sigma < 0$, while a right state may be connected to $u_l$ via an $i$-shock if $\sigma > 0$. Note that the shock and rarefaction curves are translation invariant in the $(w,v)$-plane. The large data existence theory follows from the fact that $w$ is a TVD field for $\eqref{isothermal}$.  For this reason, we will find it convenient to do our analysis of wave interactions in the $(w,v)$-plane rather than the $(\tau,v)$-plane, and measure the variation of a piecewise constant function via its variation in $w$. Henceforth, we may refer to a state either in $(\tau,v)$ or $(w,v)$ coordinates interchangeably for convenience.
\par Let $(w_m, v_m)$ be the intermediate state in the Riemann problem connecting the states $(w_l, v_l)$ and $(w_r, v_r)$. Note that $(w_m,v_m)$ is uniquely defined via the following relation
\begin{align}\label{intermediatestate}
    v_l-v_r=W(w_m-w_l)+W(w_m-w_r).
\end{align}
Suppressing the dependence of the states on $v$ for convenience, we define:
\begin{align*}
    D(w_l,w_r):=|w_l-w_m|+|w_m-w_r|.
\end{align*}
Then, we have the following lemma.
\begin{lemma}[Proposition 3.1 in \cite{MR1359913}]\label{tvd}
For any states $w_i$, $i=1,2,3$, we have
\begin{align*}
    D(w_1,w_3) \leq D(w_1,w_2)+D(w_2,w_3).
\end{align*}
\end{lemma}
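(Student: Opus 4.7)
My plan is to recast the inequality in terms of wave strengths and verify it by a sign analysis. For any Riemann problem connecting $(w_l,v_l)$ to $(w_r,v_r)$, the intermediate state from \eqref{intermediatestate} is equivalently captured by the wave-strength pair $(\sigma_1,\sigma_2) := (w_m - w_l,\, w_m - w_r)$, which satisfies
\[
\sigma_1 - \sigma_2 = w_r - w_l, \qquad W(\sigma_1)+W(\sigma_2) = v_l - v_r,
\]
and gives $D(w_l, w_r) = |\sigma_1|+|\sigma_2|$. Writing $(a,A), (b,B), (p,q)$ for the wave strengths of Riemann$(w_1,w_2)$, Riemann$(w_2,w_3)$, Riemann$(w_1,w_3)$, summing the first two pairs of constraints yields
\[
p - q = (a+b) - (A+B), \qquad W(p)+W(q) = W(a)+W(b)+W(A)+W(B),
\]
so the lemma reduces to the purely algebraic claim $|p|+|q| \le |a|+|b|+|A|+|B|$ under these two identities.

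To prove this algebraic claim I introduce the shift $\xi := p - (a+b) = q - (A+B)$, turning the second identity above into the scalar equation $W((a+b)+\xi) + W((A+B)+\xi) = W(a)+W(b)+W(A)+W(B)$, whose left-hand side is strictly increasing in $\xi$. Three structural features of $W$ drive the argument: (i) $W(s) = s$ on $(-\infty,0]$, so $W$ is additive on non-positive arguments; (ii) $W$ is strictly convex with $W(0)=0$, and therefore strictly superadditive on $[0,\infty)$; and (iii) $W(s)+W(-s) \ge 0$ for all $s \in \mathbb{R}$. Evaluating the equation for $\xi$ at $\xi = 0$ and using (i)--(ii) determines the sign of $\xi$ for each sign configuration of $(a,b,A,B)$, after which the proof proceeds by case analysis on which of the four wave strengths are shocks ($\sigma > 0$) and which are rarefactions ($\sigma < 0$).

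Two ``endpoint'' cases are clean. If all of $a,b,A,B$ are non-positive (all rarefactions), $W$ is linear on every argument, so $\xi = 0$ and the inequality holds with equality. If $a,b \ge 0$ and $A,B \le 0$ (so the 1-waves are shocks and the 2-waves are rarefactions), one verifies, using (ii) and (iii) applied to the equation for $\xi$ at candidate critical values, that $p \ge 0$ and $q \le 0$, whereupon $|p|+|q| = p - q = (a+b) - (A+B) = |a|+|b|+|A|+|B|$, equality again. The genuinely dissipative case is ``same-family shocks'' (say $a,b \ge 0$), where the strict superadditivity in (ii) forces $\xi < 0$ strictly and gives strict inequality. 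The main obstacle is the sub-cases in which $|\xi|$ is large enough to flip the sign of $p$ (or $q$) relative to $a+b$ (or $A+B$), since then the form of $|p|+|q|$ changes and must be analysed separately; I expect to close these by plugging the critical values $\xi = -(a+b)$ or $\xi = -(A+B)$ into the defining equation and using (iii) to show they cannot be attained. A cleaner alternative, avoiding much sub-casework, is to verify the inequality first on the open dense set where no $\sigma_i$ vanishes---where $(p,q)$ is a smooth function of $(a,A,b,B)$ by the Implicit Function Theorem---and then extend by continuity.
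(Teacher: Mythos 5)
The paper does not prove this lemma at all --- it is quoted directly from Proposition 3.1 of \cite{MR1359913} --- so your attempt is a from-scratch reconstruction of a cited result. Your reduction is correct and is the standard (Nishida-type) route: parametrizing each Riemann problem by its strength pair, summing the constraints to get $p-q=(a+b)-(A+B)$ and $W(p)+W(q)=W(a)+W(b)+W(A)+W(B)$, and reducing the lemma to $|p|+|q|\le|a|+|b|+|A|+|B|$ is exactly right, and your properties (i)--(iii) of $W$ are the right tools. The all-rarefaction case and the case $a,b\ge0\ge A,B$ are handled correctly.

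The ``same-family shocks'' step, however, is wrong as stated, in two ways. First, $a,b\ge0$ does not force $\xi<0$: take $a=b=1$, $A=2$, $B=-2$. Then $g(0)=W(2)+W(0)=2\sinh(1)\approx2.350$, while $W(a)+W(b)+W(A)+W(B)=4\sinh(\tfrac12)+2\sinh(1)-2\approx2.435$, so $g(0)$ is below the target and $\xi>0$: the superadditivity gain in the 1-family can be beaten by the subadditivity loss $W(A+B)<W(A)+W(B)$ when $A,B$ have opposite signs. Second, $\xi<0$ does not imply strict inequality: for $a=b=1$, $A=B=-1$ one has $\xi<0$ yet $|p|+|q|=|a|+|b|+|A|+|B|$ exactly (this is your own SR+SR equality case), and two pure same-family shocks ($A=B=0$) also give equality --- consistent with the paper's interaction table, where types 1 and 4 have $\Delta B=0$. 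What controls $|p|+|q|$ is the signs of $p$ and $q$, not of $\xi$, and the genuinely dissipative configurations are the same-family shock/rarefaction pairs ($a$ and $b$ of opposite sign), i.e.\ precisely the ``sign-flip'' sub-cases you defer. Those are where the content of the lemma lies (the paper's types 3, 5, 8, 10, the only interactions with $\Delta B<0$), and they do close by the comparison you propose --- e.g.\ for $a>0>b$, $A=B=0$, $a+b>0$, evaluating the increasing function $q\mapsto W(q+a+b)+W(q)$ at $q=0$ and $q=-b$ shows $0<q<-b$, hence $|p|+|q|=p+q=a+b+2q<a-b$ --- but as written the plan asserts a false intermediate claim and leaves the essential cases unexecuted. (Also, the IFT/continuity remark buys nothing: the casework lives on the open dense set, not on its boundary.)
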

Taking $w_2$ to be the intermediate state between $w_1$ and $w_3$ before a wave interaction, this shows that $w$ is TVD over pairwise wave interactions.

\section{Weighted relative entropy and shifts}\label{strat}
Our proof is based on the relative entropy method. Fix a compact set $\mathcal{D} \subset \R^+ \times \R$. Using the entropy \eqref{entropystructure}, we define an associated pseudo-distance, called the relative entropy, for any $a,b \in \mathcal{D} \times \mathcal{D}$:
\begin{align}\label{relativeentropy}
\eta(a|b)=\eta(a)-\eta(b)-\nabla\eta(b) \cdot (a-b).
\end{align}
We also define the relative entropy-flux:
\begin{align}\label{relativeflux}
q(a;b)=q(a)-q(b)-\nabla \eta(b) \cdot (f(a)-f(b)).
\end{align}
Now, for any $b \in \mathcal{D}$ constant, if $u$ verifies \eqref{isothermal}\eqref{entropyinequality}, then $u$ further verifies
\begin{align}\label{relativeentropic}
\partial_t \eta(u|b)+\partial_x q(u;b) \leq 0.
\end{align}
Similar to Kru\v{z}kov's theory for scalar conservation laws, this provides a full family of entropies for the system \eqref{isothermal}. However, these entropies measure the square of the $L^2$ distance rather than the $L^1$ distance between two states. Indeed, we have the following result.
\begin{lemma}[from \cite{MR2807139}, \cite{MR2508169}]\label{quadraticentropy}
For any fixed compact set $\mathcal{D}$, there exists $c^*, c^{**} > 0$ such that for all $a,b \in \mathcal{D} \times \mathcal{D}$:
\begin{equation*}
    c^*|a-b|^2 \leq \eta(a|b)\leq c^{**}|a-b|^2
\end{equation*}
The constants $c^*, c^{**}$ depend on bounds on the second derivative of $\eta$ in $\mathcal{D}$, and on the continuity of $\eta$ on $\mathcal{D}$.
\end{lemma}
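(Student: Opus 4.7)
The plan is to recognize $\eta(a|b)$ as the second-order Taylor remainder of $\eta$ at the base point $b$, and then use strict convexity of $\eta$ together with compactness of $\mathcal{D}$ to bound the Hessian of $\eta$ from above and below along the line segment joining $a$ and $b$.

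Since $\eta\in C^2(\R^+\times\R)$, Taylor's theorem with integral remainder applied to the definition \eqref{relativeentropy} yields
\begin{equation*}
\eta(a|b) \;=\; \int_0^1 (1-s)\,(a-b)^\top\,\nabla^2\eta\!\bigl(b+s(a-b)\bigr)\,(a-b)\diff s.
\end{equation*}
For the entropy \eqref{entropystructure} a direct computation gives $\nabla^2\eta(\tau,v)=\mathrm{diag}(\tau^{-2},1)$, which depends continuously on $(\tau,v)$ and is strictly positive definite on the open half-plane $\R^+\times\R$.

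Because $\mathcal{D}$ is compact in $\R^+\times\R$, its closed convex hull $K:=\overline{\mathrm{conv}(\mathcal{D})}$ is compact and still contained in $\R^+\times\R$; in particular $\tau$ is bounded above and bounded away from $0$ on $K$. Consequently the smallest and largest eigenvalues of $\nabla^2\eta$ are continuous strictly positive functions on $K$ and attain positive extrema $0<m\leq M<\infty$. For any $a,b\in\mathcal{D}$ the segment $\{b+s(a-b):s\in[0,1]\}$ lies inside $K$, so inserting the pointwise bounds $m|a-b|^2\leq (a-b)^\top\nabla^2\eta(\xi)(a-b)\leq M|a-b|^2$ into the integral identity above and using $\int_0^1(1-s)\diff s=\tfrac{1}{2}$ gives the stated inequality with $c^*=m/2$ and $c^{**}=M/2$.

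There is no serious obstacle. The only subtlety worth flagging is that the segment from $a$ to $b$ may leave $\mathcal{D}$, so the positive lower bound has to be obtained on the convex hull rather than on $\mathcal{D}$ itself; compactness of $K$ inside $\{\tau>0\}$ is precisely what prevents the $\tau^{-2}$ entry of $\nabla^2\eta$ from degenerating and what delivers the uniform constants $c^*,c^{**}$ depending only on the bounds of $\nabla^2\eta$ on $\mathcal{D}$, as stated.
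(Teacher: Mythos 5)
Your proof is correct and is essentially the standard argument behind this lemma (the paper itself gives no proof, deferring to the cited references, which argue exactly as you do): write $\eta(a|b)$ as the integral-remainder form of the second-order Taylor expansion and bound the Hessian $\nabla^2\eta=\mathrm{diag}(\tau^{-2},1)$ uniformly on the compact convex hull of $\mathcal{D}$. Your remark about passing to the convex hull so that the segment from $b$ to $a$ stays in the region where $\tau$ is bounded away from zero is the right point to flag, and it is handled correctly.
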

Studying the time evolution of the relative entropy for Lipschitz solutions yields the weak-strong principle. However, the study of shocks requires a more intricate theory, the so-called ``a-contraction with shifts''. This theory was used to show the weak-BV stability in \cite{MR4487515}. Their main proposition (Proposition 3.2) shows that if a solution $u \in \weak$ is close to a small-BV function in $L^2$ at $t=0$, it will still be close to a small-BV function in $L^2$ later in time. The main goal of this article is to remove the smallness from their proposition for the system \eqref{isothermal}.
\par In what follows, we will work with two compact subsets of $\R^+ \times \R$. The first is $\statespace$, the compact set in which solutions to $\eqref{isothermal}$ are valued. The second will be a compact set $\mathcal{D}$ that depends on the initial data $u^0$ in the following proposition. This is needed because, although $u^0$ may be initially valued in $\statespace$, the failure of the maximum principle means that the functions $\psi$ we construct need not be valued in $\statespace$ for $t > 0$. Without loss of generality, we may assume $\statespace \subset \mathcal{D}$.
\begin{proposition}\label{mainprop}
Consider the system $\eqref{isothermal}$. Then, for any $u^0 \in BV(\R) \cap L^\infty(\R)$ with $\tau_0 \geq \beta >0$, there exists a compact set $\mathcal{D} \subset \R^+ \times \R$ and $C',l > 0$ such that the following is true. For any $m > 0$, $R,T > 0$, and $u \in \weak$, there exists $\psi:\R^+ \times \R \to \mathcal{D}$ such that for almost every $0 < s < t < T$: 

\begin{align*}
    &||\psi(t,\cdot)||_{BV(\R)} \leq C'||u^0||_{BV(\R)}, \\
    &||\psi(t,\cdot)-\psi(s,\cdot))||_{L^1(\R)} \leq C'|t-s|, \\
    &||\psi(t,\cdot)-u(t,\cdot)||_{L^2([-R+lt,R-lt])} \leq C'\left(||u^0-u(0,\cdot)||_{L^2([-R,R])}+\frac{1}{m}\right).
\end{align*}
The domain $\mathcal{D}$ and constants $C'$ and $l$ depend only on $||u^0||_{BV(\R)}, ||u^0||_{L^\infty(\R)}$, and $\beta$.
\end{proposition}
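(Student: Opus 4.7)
The plan is to follow the modified front tracking framework of \cite{MR4487515}, adapted from small-BV to large-BV by exploiting the TVD structure specific to \eqref{isothermal}. For each discretization parameter $\nu>0$, I would construct a piecewise constant approximation $\psi_\nu$ whose physical fronts carry shifts dictated by the theory of $a$-contraction with shifts applied against the wild solution $u \in \weak$, together with a piecewise constant weight $a_\nu$. I would then extract a limit $\psi_\nu \to \psi$ via Theorem \ref{compactness} and verify each of the three bounds on $\psi$.

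\textbf{Modified front tracking.} First, approximate $u^0$ by a piecewise constant function $u_\nu^0$ in $(w,v)$-coordinates satisfying $||u_\nu^0||_{BV(\R)} \leq ||u^0||_{BV(\R)}$ and $u_\nu^0 \to u^0$ in $L^1(\R)$. Solve each Riemann problem approximately: shocks are retained as single jumps whose positions are allowed to shift in time; rarefactions are split into fans of $O(m)$ small rarefaction fronts, which produces the $1/m$ error later; and at each wave interaction the resulting Riemann problem is re-resolved in this way. Non-physical fronts of some fixed speed $l$ are introduced to keep the total number of fronts finite. The finiteness of the algorithm in the large-BV regime is the content of Asakura's theorem \cite{MR2126567}, which crucially uses the TVD field $w=-\ln\tau$ of Lemma \ref{tvd}. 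Each physical shock position $X_i(t)$ is then taken to satisfy a shift ODE driven by the traces of $u$, so that the single-shock $L^2$-stability estimates to be recalled in Section \ref{riemannentropy} are applicable.

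\textbf{Weight and dissipation.} On intervals between interactions, assign a piecewise constant weight $a_\nu(t,x)$ that jumps across each physical front of $\psi_\nu$. The jump across a given shock is tuned using the single-shock $a$-contraction formulas so that the weighted relative entropy
\[
E_\nu(t) \;=\; \int_\R a_\nu(t,x)\,\eta(u(t,x)\,|\,\psi_\nu(t,x))\,dx
\]
dissipates across that shock. The contributions from rarefaction fans and non-physical fronts are controlled by $O(1/m)$, while the interaction errors are absorbed thanks to the TVD-type estimate of Lemma \ref{tvd}. The key novelty beyond \cite{MR4487515} is that individual weight jumps are no longer perturbations of $1$, so the product of weight jumps over all fronts must be shown to remain bounded both above and below by constants depending only on $||u^0||_{BV(\R)}$. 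I would exploit the translation invariance of the wave curves \eqref{wavecurves} in the $(w,v)$-plane and the TVD property of $w$ to cap the total multiplicative growth of $a_\nu$. Integrating $E_\nu$ in time, using a domain of dependence argument to shrink to $[-R+lt,R-lt]$, and invoking Lemma \ref{quadraticentropy} then gives the claimed $L^2$ estimate.

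\textbf{Passage to the limit and main obstacle.} The BV bound on $\psi_\nu$ follows from Lemma \ref{tvd} applied to $w$ combined with the uniform bound on $\tau$ away from vacuum; the time-Lipschitz $L^1$ estimate is the standard front tracking estimate using the uniform bound $l$ on wave speeds. Theorem \ref{compactness} then supplies a limit $\psi:\R^+\times\R\to\mathcal{D}$ for some compact $\mathcal{D}$ depending only on $||u^0||_{BV(\R)},||u^0||_{L^\infty(\R)}$ and $\beta$, inheriting the three claimed bounds from the approximate statements. The main obstacle is the weight construction in the large-BV regime: the perturbative weight construction from the small-BV setting genuinely fails, and one must establish uniform two-sided bounds on a multiplicative weight whose individual factors change non-perturbatively across interactions. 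This, carried out in Section \ref{weight}, is the heart of the argument.
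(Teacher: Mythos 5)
Your outline follows the same overall route as the paper (modified front tracking with shifted shocks, a piecewise constant weight, relative entropy dissipation in a cone of information, compactness via Theorem \ref{compactness}), but it has a genuine gap exactly where you yourself locate ``the heart of the argument'': the weight is never actually constructed, and the uniform two-sided bound on it is not proved. Appealing to translation invariance of the wave curves and to Lemma \ref{tvd} is not sufficient. The difficulty is combinatorial: across shock--rarefaction interactions new small shocks are \emph{created} with positive total strength, shocks change status, and the per-shock factors must be re-tuned after every interaction while keeping the global weight monotone decreasing in time. What the paper actually does is (i) classify shocks as big ($\sigma\geq\eps$, weight jump a fixed factor $C_1^{\pm1}$) or small ($\sigma<\eps$, weight jump $1\mp C_0\sigma_i$), tracking how the classification evolves through an explicit case-by-case list of all non-trivial interactions; (ii) prove that the cumulative created small-shock strength $\sum_t\Delta_+S(t)$ is controlled by the decrease of the $w$-variation $B$ (Lemma \ref{Sdissipationbd}, which needs the quantitative bound $1\leq|W'(s)|\leq C_3$, not merely the TVD inequality), and that big shocks can be created or demoted only $O(V/\eps)$ times (Lemma \ref{bigsmall}); and (iii) multiply the product of per-shock factors by two additional prefactors $C_1^{L(t)}$ and $Q(t)$ whose sole purpose is to restore monotonicity of $a$ at interaction times and whose uniform lower bounds rest on (ii). Without (ii)--(iii) the product of weight jumps could drift to $0$ or $\infty$, or fail to decrease at an interaction, and the dissipation argument collapses; this cannot be deferred as a detail.

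A secondary issue: you introduce non-physical fronts of speed $l$ to keep the number of fronts finite. For this $2\times 2$ system they are unnecessary (the paper proves finiteness of interactions directly from the TVD functionals together with Lemma 2.5 of \cite{MR1820292}), and they are not harmless: a non-physical front is a discontinuity of $\psi$ to which neither Proposition \ref{shockstability} nor Proposition \ref{rarefactionstability} applies, so its contribution to the flux terms in the relative entropy balance must be estimated by its strength, and you would then need a uniform bound on the total non-physical strength in the large-BV regime --- an estimate your proposal does not supply. Finally, the $1/m$ error does not come from splitting rarefactions into $O(m)$ fronts; it comes from taking the front-strength parameter $\delta$ small enough that the cumulative rarefaction error $\Theta\kappa$ over $[0,T]$ and the $L^2$ error in approximating $u^0$ are both below the required threshold.
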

The domain $\mathcal{D}$ and the functions $\psi$ satisfying Proposition \ref{mainprop} will be constructed in Section \ref{mfts} via a modified front tracking scheme. Due to the shifts, they will not be solutions to \eqref{isothermal}. In fact, the only thing we will know about them is an estimate on their BV norm. So Proposition \ref{mainprop} simply propagates an estimate on the distance to a BV function in time. Nevertheless, it is enough to grant Theorem \ref{main}.
\par The main difficulty is in defining the constant $C'$ to satisfy the third property. The $L^2$-stability for shocks is given by a contraction in a weighted relative entropy distance, where the weight $a$ depends on the strength of the shock. So the weight must be updated every time two waves interact, and can grow or decay greatly as many shocks interact or are produced. The main property we need to show is that the weight cannot degenerate to zero or blow up to infinity through this process. We finish this section by showing how Proposition \ref{mainprop} implies Theorem \ref{main}.
\newline \textit{Proof Of Theorem \ref{main}.} \indent Fix $T,R > 0$. Passing to a subsequence if needed, assume that $||u^0_m-u^0||_{L^2(\R)} \leq \frac{1}{m}$. By Proposition \ref{mainprop}, there exists a sequence of functions $\{\psi_m\}_{m \in \N}$ uniformly bounded in $L^\infty(\R^+;BV(\R))$ such that \eqref{lipintime} holds uniformly in $m$. Moreover, for all $0< t < T$, and any $R' > 0$: 
\begin{align}\label{close}
    ||\psi_m(t,\cdot)-u_m(t,\cdot)||_{L^2([-R'+lt,R'-lt])} \leq \frac{2C'}{m}.
\end{align}
By Theorem \ref{compactness}, there exists $\psi \in L^\infty(\R^+, BV(\R))$ such that $\psi_m$ converges in $C([0,T];L^2([-R,R]))$ to $\psi$. Taking $R'$ sufficiently large in $\eqref{close}$, we have that $u_m$ converges in $L^\infty([0,T];L^2([-R,R]))$ to $\psi$. Since the convergence is strong, and each $u_m$ verifies \eqref{isothermal}\eqref{entropyinequality}, so does $\psi$. Further, $\psi$ is valued in $\statespace$ as all the $u_m$ are. So, by Theorem \ref{uniqueness}, it is the unique solution to \eqref{isothermal} verifying the uniform BV bound. So, $\psi=u$. This completes the proof of Theorem \ref{main}. The remainder of the paper is devoted to proving Proposition \ref{mainprop}.

\begin{remark}
One may prove Theorem \ref{maineulerian} from Proposition \ref{mainprop} by doing the change of coordinates at the level of the front-tracking approximations in the proof of Proposition \ref{mainprop} (cf. Proposition 6.1 in \cite{MR3322780}). For convenience, a proof is given in Appendix \ref{appendixeulerian}.
\end{remark}

\section{Relative entropy for the Riemann problem}\label{riemannentropy}
In this section, we give the relative entropy stability results for shocks and rarefactions. The first proposition, the relative entropy stability for shocks, is a summary of the results of \cite{MR4184662} and \cite{MR4667839}. We measure the strength of a shock by its jump in $w$.
For convenience, given a Lipschitz function $h: \R^+ \to \R$, we will use the notation $u_+(t)=u(t,h(t)+)$, and $u_-(t)=u(t,h(t)-)$. 
\begin{proposition}[Cf. Proposition 5.1 in \cite{MR4184662} and Theorem 1 in \cite{MR4667839}]\label{shockstability}
Fix a compact set $\mathcal{D} \subset \R^+ \times \R$. There exist constants $0 < C_0 < \frac{1}{2},0< C_1 < 1, \eps>0,$ and $\hat{\lambda}>0$ such that the following is true.
\par Let $u \in \weak$, $\overline{t} \in [0,\infty)$, and $x_0 \in \R$.  Consider any entropic shock $(u_l,u_r)$ such that $u_l, u_r \in \mathcal{D}$. Let $\sigma$ be the strength of the shock $\sigma=|w_l-w_r|$. Then, if $\sigma \geq \frac{\eps}{2}$, for any $a_1, a_2 > 0$ satisfying:
\begin{align*}
&\frac{a_2}{a_1} \leq C_1, \indent \text{ if $(u_l,u_r)$ is a $1$-shock}, \\
&\frac{a_2}{a_1} \geq \frac{1}{C_1}, \indent \text{ if $(u_l,u_r)$ is a $2$-shock},
\end{align*}
there exists a Lipschitz shift $h_1:[\overline{t},\infty) \to \R$ with $h(\overline{t})=x_0$ such that the following dissipation functional verifies:
\begin{equation*}
a_1(\dot{h}_1(t)\eta(u_-(t)|u_l)-q(u_-(t);u_l))-a_2(\dot{h}_1(t)\eta(u_+(t)|u_r)-q(u_+(t);u_r)) \leq 0,
\end{equation*}
for almost every $t \in [\overline{t},\infty)$. On the other hand, if $\sigma < \eps$, for any $a_1, a_2 > 0$ verifying:
\begin{align*}
& 1-2C_0\sigma \leq \frac{a_2}{a_1} \leq 1-\frac{C_0}{2}\sigma, \text{ if $(u_l,u_r)$ is a $1$-shock}, \\
& 1+\frac{C_0\sigma}{2} \leq \frac{a_2}{a_1} \leq 1+2C_0\sigma, \text{ if $(u_l,u_r)$ is a $2$-shock},
\end{align*}
there exists a Lipschitz shift $h_2:[\overline{t},\infty) \to \R$ with $h_2(\overline{t})=x_0$ such that the following dissipation functional verifies:
\begin{equation*}
a_1(\dot{h}_2(t)\eta(u_-(t)|u_l)-q(u_-(t);u_l))-a_2(\dot{h}_2(t)\eta(u_+(t)|u_r)-q(u_+(t);u_r)) \leq 0,
\end{equation*}
for almost every $t \in [\overline{t},\infty)$. Moreover, if $(u_l,u_r)$ is a $1-$shock, then for almost all $t \in [\overline{t},\infty)$:
\begin{align*}
    -\hat{\lambda} \leq \dot{h}_{1,2}(t) < 0.
\end{align*}
Similarly, if $(u_l,u_r)$ is a $2-$shock, then for almost all $t \in [\overline{t},\infty)$:
\begin{align*}
    0 < \dot{h}_{1,2}(t) \leq \hat{\lambda}.
\end{align*}
\end{proposition}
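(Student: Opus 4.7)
Since this proposition is labeled as a synthesis of Proposition 5.1 in \cite{MR4184662} and Theorem 1 in \cite{MR4667839}, the plan is to verify that these existing $a$-contraction results apply to the isothermal setting and to glue them together to cover both small and large shocks. The central object in the $a$-contraction theory is the pointwise dissipation functional
\[
F(V_-,V_+;s) := a_1\bigl(s\,\eta(V_-|u_l) - q(V_-;u_l)\bigr) - a_2\bigl(s\,\eta(V_+|u_r) - q(V_+;u_r)\bigr).
\]
If, for every admissible pair of endpoints $(V_-,V_+) \in \mathcal{D}\times\mathcal{D}$, one can exhibit a speed $s^*(V_-,V_+)$ making $F \leq 0$, then a Filippov-type ODE argument (cf.\ the construction in \cite{MR4487515}) produces a Lipschitz shift $h$ with $\dot h(t) = s^*(u_-(t),u_+(t))$, using the Strong Trace property of $u$ to ensure $u_\pm(t)$ are well-defined.

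For large shocks ($\sigma \geq \eps/2$), I would invoke \cite{MR4184662}, which establishes the pointwise inequality by exploiting the convex geometry of the entropy and entropy-flux on the Hugoniot curve. The key algebraic ingredient is that the admissible shock, combined with the asymmetric weight ratio $a_2/a_1 \leq C_1 < 1$ (for $1$-shocks), purchases enough dissipation to dominate the error terms coming from arbitrary $V_\pm \in \mathcal{D}$. One needs uniformity of the extracted constants over all shocks of strength $\sigma \geq \eps/2$ with endpoints in $\mathcal{D}$; this follows from compactness, since the shock curves of the isothermal system have the globally explicit form \eqref{wavecurves} in the $(w,v)$-plane and the Hugoniot and entropy structures depend continuously on the base state.

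For small shocks ($\sigma < \eps$), I would invoke \cite{MR4667839}, which gives the sharp linear constraints $1 - 2C_0\sigma \leq a_2/a_1 \leq 1 - (C_0/2)\sigma$ for $1$-shocks (and the analogue for $2$-shocks) by Taylor expanding $F$ to leading order in $\sigma$. The constants $C_0$ and $\eps$ are determined by the second-order coefficients in that expansion, which involve only the Hessian of $\eta$ and derivatives of the shock curve at the base state; they are therefore uniform in $\mathcal{D}$. The shift-velocity bounds follow from the fact that the Rankine--Hugoniot speed of an extremal shock is bounded away from $0$ and has definite sign by \eqref{evs} (negative for $1$-shocks, positive for $2$-shocks), and the constructed $\dot h$ agrees with this speed up to a perturbation controlled by $\sigma$ and the traces. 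The main obstacle is to match the constants $(C_0,C_1,\eps)$ between the two regimes so that for every admissible shock strength, at least one of the two weight hypotheses is available, and to ensure the construction is uniform across base states in $\mathcal{D}$ rather than merely local to a single fixed shock.
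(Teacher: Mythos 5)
Your proposal matches the paper's treatment: the paper does not reprove this proposition but cites Proposition 5.1 of \cite{MR4184662} for large shocks and Theorem 1 of \cite{MR4667839} for small shocks, notes that the local statements are globalized over $\mathcal{D}$ by a compactness argument, and observes that measuring shock strength by the jump in $w$ is equivalent to the Euclidean measurement used in those references. Your identification of the need to match constants so the two regimes overlap on $\frac{\eps}{2} \leq \sigma < \eps$ is exactly the point the paper flags as essential for the weight construction.
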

Notice that there is an interval where the two regimes overlap:  for shocks with strength $\frac{\eps}{2} \leq \sigma < \eps$, we may have the non-positivity of the dissipation functional with either choice of weight. This will be important in Section \ref{weight}. 
\begin{remark}
Note that in the articles \cite{MR4184662} and \cite{MR4667839}, the shock strength is measured by Euclidean distance while in this context, we measure the shock strength by the jump in $w$. This is equivalent due to the specific structure of the shock curves and the uniform continuity of the natural log on the compact set $\mathcal{D}$.
\end{remark}
\begin{remark}
Proposition 5.1 in \cite{MR4184662} and Theorem 1 in \cite{MR4667839} prove the results of Proposition \ref{shockstability} locally around a fixed shock, while Proposition \ref{shockstability} holds globally for shocks with both states in the compact set $\mathcal{D}$. The compactness argument to move from the local statements to the global one is simple. The interested reader is referred to \cite{concaveconvex}, where this is done in the scalar setting.
\end{remark}
\begin{remark}
By examining the proofs of Proposition 5.1 in \cite{MR4184662} and Theorem 1 in \cite{MR4667839}, one notes that $C_1$ and $\eps$ may be taken arbitrarily small if desired, and $C_0$ may be taken arbitrarily large if desired. In particular, we may assume that $C_0\eps \leq \frac{1}{2}$.\end{remark}
For the rarefactions, the following proposition follows from the fact that rarefaction waves for $\eqref{isothermal}$ verify a relative entropy contraction without the need for a shift. A proof may be found in \cite{MR4487515}.
\begin{proposition}[Proposition 4.4 in \cite{MR4487515}]\label{rarefactionstability}
There exists a constant $\Theta>0$ such that the following is true. For any rarefaction wave $\overline{u}(y), s_l \leq y \leq s_r$, with $\overline{u}(s_l)=u_l, \overline{u}(s_r)=u_r$, denote:
\[
\kappa=|s_l-s_r|+\sup_{y \in [s_l,s_r]}|u_l-\overline{u}(y)|.
\]
Then, for any $u \in \weak$, any $s_l \leq s \leq s_r$, and any $t > 0$, we have:
\begin{align*}
&\int_0^t\left(q(u(t,st+);u_r)-q(u(t,st-);u_L)-s(\eta(u(t,st+)|u_r)-\eta(u(t,st-)|u_l))\right)\diff t \\
&\leq \Theta \kappa |u_l-u_r|t.
\end{align*}
\end{proposition}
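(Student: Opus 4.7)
The plan is to reduce the bound to a direct computation along the rarefaction curve combined with the relative entropy inequality. Writing $F(u; v; s) := q(u; v) - s\eta(u|v)$, the integrand becomes $F(u_+(\tau); u_r; s) - F(u_-(\tau); u_l; s)$ where $u_\pm(\tau) := u(\tau, s\tau\pm)$. Add and subtract the auxiliary quantity $F(u_+(\tau); u_l; s)$ to split it into a ``change-of-reference'' piece $G_1 := F(u_+; u_r; s) - F(u_+; u_l; s)$ and a ``single-reference'' piece $G_2 := F(u_+; u_l; s) - F(u_-; u_l; s)$.

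For $G_1$, parameterize along the rarefaction $\bar u(y)$ for $y \in [s_l, s_r]$ and use the identity
\[
\partial_v F(u; v; s) = -\nabla^2\eta(v)\bigl(\Phi(u) - \Phi(v)\bigr), \qquad \Phi(u) := f(u) - su,
\]
which follows from the compatibility $\nabla q = \nabla\eta\nabla f$. The fundamental theorem of calculus gives
\[
G_1 = -\int_{s_l}^{s_r} \bigl[\nabla^2\eta(\bar u(y))\,\bar u'(y)\bigr]\cdot\bigl(\Phi(u_+) - \Phi(\bar u(y))\bigr)\,dy.
\]
Writing $\Phi(u_+) - \Phi(\bar u(y)) = [\Phi(u_+) - \Phi(u_l)] + [\Phi(u_l) - \Phi(\bar u(y))]$, the first bracket pulls out of the $y$-integral leaving the factor $\nabla\eta(u_r) - \nabla\eta(u_l)$ of size $O(|u_l - u_r|)$; the second is controlled pointwise by $|\Phi(u_l) - \Phi(\bar u(y))| \leq C|\bar u(y) - u_l| \leq C\kappa$, contributing $O(\kappa|u_l - u_r|)$ after integrating in $y$. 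For $G_2$, integrate over $\tau \in [0, t]$ and apply the relative entropy inequality \eqref{relativeentropic} with $b = u_l$ together with the weak form of \eqref{isothermal} on the region $\{x < s\tau\}$; the divergence theorem expresses $\int_0^t G_2\,d\tau$ in terms of boundary integrals of $\eta(u|u_l)$ and a weak-form linear-in-$u$ flux, which combines with the leftover $\Phi(u_+)$-linear contribution from $G_1$ to cancel out all $u$-dependent remainders of size larger than $\kappa|u_l - u_r|$.

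The main obstacle is making the cancellation quantitative: individually, $G_1$ carries a term of size $|u_l - u_r|$ uniformly in $u_+$, and $G_2$ carries a trace-flux term that is not a priori bounded in $u$. Extracting the sharper product $\kappa|u_l - u_r|$ requires exploiting two structural features specific to the isothermal Euler system, namely that rarefaction curves are affine in $(w, v)$-coordinates by \eqref{wavecurves} (so $\bar u'(y)$ is smooth with $|\bar u'|$ bounded along the curve), and that $s \in [s_l, s_r]$ forces $|\lambda_i(\bar u(y)) - s| \leq C|u_l - u_r|$ by monotonicity of $\lambda_i = \mp 1/\tau$ along the rarefaction. Combining these structural facts with the cancellation between $G_1$ and $G_2$ yields the estimate $\Theta\kappa|u_l - u_r|\,t$ claimed in the proposition.
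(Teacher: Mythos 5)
The paper does not actually prove this statement: it is quoted verbatim as Proposition 4.4 of \cite{MR4487515}, so your attempt has to be measured against the known argument for that result. Your skeleton is the right one — split the integrand into a change-of-reference piece $G_1$, computed by differentiating $v\mapsto q(u;v)-s\eta(u|v)$ along the rarefaction curve (your identity $\partial_v[q(u;v)-s\eta(u|v)]=-\nabla^2\eta(v)(\Phi(u)-\Phi(v))$ is correct), and a single-reference piece $G_2$ across the ray $x=st$. But the way you close the estimate has a genuine gap. First, $G_2$ should be handled pointwise in time, not via the divergence theorem: the strong traces $u_\pm$ across the Lipschitz ray $x=st$ satisfy the Rankine--Hugoniot relation $f(u_+)-f(u_-)=s(u_+-u_-)$ and the entropy inequality $q(u_+)-q(u_-)\le s(\eta(u_+)-\eta(u_-))$ for a.e.\ $t$, which give $G_2\le 0$ directly. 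Integrating \eqref{relativeentropic} over $\{x<s\tau\}$ instead produces bulk terms such as $\int\eta(u(t,\cdot)|u_l)\,dx$ and terms involving the initial data of $u$, which do not appear in, and cannot be bounded by, $\Theta\kappa|u_l-u_r|t$.

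More seriously, the cancellation you invoke between $G_2$ and the ``leftover $\Phi(u_+)$-linear contribution from $G_1$'' does not exist. Take $u\equiv v$ constant: then $G_2\equiv 0$, yet $G_1$ still contains the term $-[\nabla\eta(u_r)-\nabla\eta(u_l)]\cdot[\Phi(u_+)-\Phi(u_l)]$, which is of size $O(|u_l-u_r|)$ uniformly over $v\in\statespace$ — a priori too large by a factor $1/\kappa$. The missing ingredient is a sign, not a cancellation. Keeping $\bar u(y)$ as the base point, one writes
\[
[\Phi(\bar u(y))-\Phi(u_+)]^T\nabla^2\eta(\bar u(y))\,\bar u'(y)=-c(y)\,\ell_i(\bar u(y))\cdot f(u_+|\bar u(y))-c(y)\,(y-s)\,\ell_i(\bar u(y))\cdot(u_+-\bar u(y)),
\]
where $\bar u'(y)=c(y)r_i(\bar u(y))$ with $c(y)\ge 0$, $\ell_i=\nabla^2\eta\, r_i$ is a left eigenvector, and $f(a|b)$ denotes the relative flux. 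The second term is $O(\kappa)$ by exactly the eigenvalue observation you make ($\lambda_i(\bar u(y))=y$ and $|y-s|\le|s_r-s_l|\le\kappa$), and integrates in $y$ to $O(\kappa|u_l-u_r|)$. The first term is non-positive because $\ell_i(v)\cdot f(w|v)\ge 0$ for \eqref{isothermal}: here $f(w|v)=(0,p(\tau_w|\tau_v))$ with $p$ convex, and the orientation of $r_i$ along a rarefaction forces the second component of $\ell_i$ to be positive. A Burgers sanity check makes the point: with $u\equiv v$ and $s=f'(u_l)$ one computes $D=-\tfrac{\epsilon}{2}(v-u_l)^2+\tfrac{\epsilon^3}{6}$, so the dangerous $O(\epsilon)$ term is present but negative. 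Without this convexity/orientation argument your bound on $G_1$ is off by a factor of $\kappa$, and nothing coming from $G_2$ can repair it.
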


\section{Modified front tracking scheme}\label{mfts}
In this section, we introduce the modified front tracking scheme. Briefly, the idea is as follows. One approximates a given initial data $u^0$ by a piecewise constant function. Then, one approximately solves the Riemann problems that arise and lets the waves propagate. We then solve Riemann problems on demand whenever two waves interact. A more detailed exposition is given in Section \ref{construction}. We refer to the foundational book of Bressan \cite{MR1816648} for a general introduction to the front tracking algorithm for $n \times n$ systems. However, as $\eqref{isothermal}$ is a $2 \times 2$ system, it is well-known that one need not use the ``non-physical waves'' developed by Bressan \cite{MR3001706}. Further, in this paper, we do not use the front tracking algorithm to generate a solution to the conservation law. We only use the fact that the algorithm gives a sequence of functions with uniformly bounded total variation. 

\par In Section \ref{solver0} we describe the Riemann solver for constructing approximate solutions to the Riemann problem. In our solver, the solution will depend on a classification of incoming and outgoing shocks as either ``big'' or ``small''. In Section \ref{solver0}, we also describe how to classify shocks at $t=0$. In Section \ref{solvert}, we give an explicit list of all possible two-wave interactions at $t=t^* > 0$ alongside the respective classifications of shocks and useful estimates on various quantities when the interactions occur. Although the first description is a much more succinct way to describe the solution to the Riemann problem, the case-by-case analysis in Section \ref{solvert} will be needed for fine estimates on the weight $a$ in Section \ref{weight}. 
\subsection{The Riemann solver}\label{solver0}
In this section, we describe how to solve Riemann problems for \eqref{isothermal}. Fix a small parameter $0 < \delta < \frac{\eps}{2}$. Given two constant states $u_l=(\tau_l,v_l)$ and $u_r=(\tau_r, v_r)$, recall from $\eqref{wavecurves}$ that there are two smooth curves in the $(w,v)$-plane
\begin{align*}
\sigma \mapsto T_i(\sigma)((w_l,v_l)):=(w_l+(-1)^{i-1}\sigma, -W(\sigma)+v_l), \indent i=1,2,
\end{align*}
parameterized by $w$ such that
\begin{align}\label{Riemannsolution}
    (w_r, v_r)=T_2(\sigma_2) \circ T_1(\sigma_1)((w_l,v_l)),
\end{align}
for some $\sigma_1$ and $\sigma_2$. We emphasize that the strength of the waves is measured by the jump in $w$ and that we solve Riemann problems in the $(w,v)$-plane. Note that if $\sigma_i$ is positive (negative) the states are separated by an $i$-shock ($i$-rarefaction). 
\par Now, assume that at a time $\overline{t}$, there is an interaction at the point $\overline{x}$. Let $u_l, u_r$ be the Riemann problem generated by the interaction and let $\sigma_1, \sigma_2$ be the solution defined by \eqref{Riemannsolution}. Define
\begin{align*}
    &u_0:=u_l, \\
    &u_1:=(\tau_1, v_1) \text{ such that } (w_1, v_1)=T_1(\sigma_1)((w_l,v_l)), \\
    &u_2:=u_r. 
\end{align*}
If $\sigma_i < 0$, then we let:
\begin{align*}
    p_i:=\ceil{\frac{\sigma_i}{\delta}},
\end{align*}
where $\ceil{s}$ denotes the smallest integer greater than or equal to $s$. For $l=1, ..., p_i$, we define
\begin{align}
    u_{i,l}:=(\tau_{i,l}, v_{i,l}) \text{ such that } (w_{i,l}, v_{i,l})=T_i(\frac{l\sigma_i}{p_i})((w_{i-1}, v_{i-1})), \indent x_{i,l}(t):=\overline{x}+(t-\overline{t})\lambda_i(u_{i,l}),
\end{align}
while if $\sigma_i > 0$, we define $p_i=1$ and
\begin{align}
    u_{i,l}=u_i, \indent x_{i,l}(t)=h_{k}(t),
\end{align}
where $k=1$ if the shock is ``big'', and $k=2$ if the shock is ``small'' (at $t=0$, all shocks with $\sigma_i \geq \eps$ will be classified as big, and all shocks with $\sigma_i < \eps$ will be classified as small. The classification of shocks arising from an interaction at $t=t^*>0$ will be discussed in Section \ref{solvert}). Here, $h_k$ are the shifts coming from Proposition \ref{shockstability}. Then, the approximate solution to the Riemann problem is:
\begin{align}\label{approxriemannsolution}
    v_\alpha(t,x)=
\begin{cases}
  u_l & x < x_{1,1}(t), \\
  u_r &  x > x_{2,p_2}(t),  \\
  u_i & x_{i,p_i}(t)<x<x_{i+1,1}(t), \\
  u_{i,l} & x_{i,l}<x<x_{i,l+1}(t), \indent (l=1, ..., p_{i-1}). 
\end{cases}
\end{align}
\begin{remark}
The function is well-defined as $x_{i,p_i}(t) < x_{i+1,1}(t)$ due to the separation of wave speeds in Proposition \ref{shockstability}.
\end{remark}

\subsection{Classification of shocks and estimates for the Riemann solver at $t=t^*>0$}\label{solvert}
In this section, we describe all possible two-wave interactions which occur at a time $t=t^* > 0$ and the classification of shocks in the corresponding solution of the Riemann problem. There are $6$ possible waves
\begin{center}
\begin{tabular}{ c c c }
a) big $1$-shock & b) small $1$-shock & c) big $2$-shock \\ 
d) small $2$-shock & e) $1$-rarefaction & f) $2$-rarefaction \\    
\end{tabular}
\end{center}
These give rise to $21$ possible distinct interactions (discounting which wave is on the left or on the right). They can be sorted into categories as follows.
\begin{center}
\begin{tabular}{ |c|c|c| } 
 \hline
Impossible & Trivial & Non-trivial \\ 
e)+e) & a)+c) & a)+a) \\ 
f)+f) & a)+d) & a)+b) \\ 
      & a)+f) & a)+e) \\
      & b)+c) & b)+b) \\
      & b)+d) & b)+e) \\
      & b)+f) & c)+c) \\
      & c)+e) & c)+d) \\
      & d)+e) & c)+f) \\
      & e)+f) & d)+d) \\
      &       & d)+f) \\
 \hline
\end{tabular}
\end{center}
The interactions labeled ``impossible'' cannot occur due to the wave speeds, and the ``trivial'' interactions are labeled as such because the corresponding Riemann solution is simply the waves passing through each other (although the intermediate state may change). So we need only consider the non-trivial interactions. Recall that we measure the strength of waves by their jump in $w$. So, given a front tracking pattern defined in $[0,T] \times \R$, define the following quantities:
\begin{align}
    &S(t)=\sum_{i: \text{small shock}}\sigma_i, \\
    &B(t)=\sum_{i: \text{wave}}|\sigma_i|.
\end{align}
$S$ measures the total strength of all small shocks present in the solution at time $t$, while $B$ measures the total variation in $w$. Firstly, note that these quantities may only change at a time when waves interact. Further note that by Lemma \ref{tvd}, $B$ can never increase when two waves interact; it can only decrease or remain the same. 
\par Now, consider a non-trivial interaction at $t=t^* > 0$ between a wave $(u_l, u_m)$ and a wave $(u_m,u_r)$. Henceforth, we suppress the dependence of states on $v$ and reference them only by $w$ for convenience. Denote by $w_m'$ the intermediate state in the Riemann solution between $w_l$ and $w_r$, determined via \eqref{intermediatestate}. We go though each non-trivial interaction and record the changes in $S(t)$ and $B(t)$ when the interaction occurs. Without loss of generality, as the wave curves are invariant under translation, for an interaction of type m)+n), assume that $(w_l, w_m)$ is the wave of type m), and $(w_m, w_r)$ is the wave of type n). \newline
\newline 
\underline{\textbf{(1): a)+a)}} The solution will be a big $1$-shock composed with a possibly partitioned $2$-rarefaction. In this case, $\Delta S(t^*):=S(t^*+)-S(t^*-)=0$, and $\Delta B(t^*)$=0.
%\begin{figure}[ht]
%    \includegraphics[scale=0.1]{figures/a)+a).png}
%\end{figure}
\newline 
\underline{\textbf{(2): a)+b)}} The solution will be a big $1$-shock composed with a possibly partitioned $2$-rarefaction. In this case, $\Delta S(t^*)=-(w_r-w_m) < 0$, and $\Delta B(t^*)$=0. 
\newline
\underline{\textbf{(3): a)+e)}} The solution will be a $1$-shock composed with a small $2$-shock. Here, we split into two cases. 
\newline
\indent \underline{\textbf{(3A)}} If $(w_l,w_m')$ has strength $\sigma \geq \frac{\eps}{2}$, it will retain the label ``big'', and the solution will be a big $1$-shock composed with a small $2$-shock. Here, $\Delta S(t^*)=w_m'-w_r > 0$, and $\Delta B(t^*)=-2(w_m-w_m') < 0$.
\newline 
\indent \underline{\textbf{(3B)}} If $(w_l,w_m')$ has strength $\sigma < \frac{\eps}{2}$, it will be labeled ``small'', and the solution will be a small $1$-shock composed with a small $2$-shock. Here, $\Delta S(t^*)=w_m'-w_l+w_m'-w_r \leq \eps$, and $\Delta B(t^*)=-2(w_m-w_m') < 0$.
\newline
\underline{\textbf{(4): b)+b)}} The solution will be a $1$-shock composed with a possibly partitioned $2$-rarefaction. Here, we split into two cases.
\newline
\indent \underline{\textbf{(4A)}} If $(w_l,w_m')$ has strength $\sigma < \eps$, it will remain ``small'', and the solution will be a small $1$-shock composed with a possibly partitioned $2$-rarefaction. Here, $\Delta S(t^*)=-(w_r-w_m') < 0$, and $\Delta B(t^*)=0$.
\newline
\indent \underline{\textbf{(4B)}} If $(w_l,w_m')$ has strength $\sigma \geq \eps$, it will become ``big'', and the solution will be a big $1$-shock composed with a possibly partitioned $2$-rarefaction. Here, $\Delta S(t^*)=-(w_r-w_l) \leq -\eps$, and $\Delta B(t^*)=0$.
\newline
\underline{\textbf{(5): b)+e)}} Here, we split into two cases.
\newline
\indent \underline{\textbf{(5A)}} If $(w_l, w_m')$ is a $1-$shock, the solution will be a small $1$-shock composed with a small $2$-shock. Here, $\Delta S(t^*)=2w_m'-w_m-w_r$, while $\Delta B(t^*)=-2(w_m-w_m')<0$.
\newline
\indent \underline{\textbf{(5B)}} If $(w_l, w_m')$ is a $1-$rarefaction, the solution will be a $1-$rarefaction composed with a small $2$-shock. Here, $\Delta S(t^*)=w_l-w_m+w_m'-w_r \leq 0$, while $\Delta B(t^*)=-2(w_m-w_l)<0$.
\newline
\par The next set of interactions concern interactions involving $2$-shocks. The interaction $i$ will be essentially just the reflection of the interaction $i-5$ (e.g. type 1 interactions are comparable to type 6 ones).
\newline 
\newline
\underline{\textbf{(6): c)+c)}} The solution will be a possibly partitioned $1$-rarefaction composed with a big $2$-shock. In this case, $\Delta S(t^*)=0$, and $\Delta B(t^*)$=0.
\newline 
\underline{\textbf{(7): c)+d)}} The solution will be a possibly partitioned $1$-rarefaction fan composed with a big $2$-shock. In this case, $\Delta S(t^*)=w_r-w_m < 0$, and $\Delta B(t^*)$=0.
\newline
\underline{\textbf{(8): c)+f)}} The solution will be a small $1$-shock composed with a $2$-shock. Here, we split into two cases. 
\newline
\indent \underline{\textbf{(8A)}} If $(w_m',w_r)$ has strength $\sigma \geq \frac{\eps}{2}$, it will retain the label ``big'', and the solution will be a small $1$-shock composed with a big $2$-shock. Here, $\Delta S(t^*)=w_m'-w_l > 0$, and $\Delta B(t^*)=-2(w_l-w_m-w_m'+w_r) < 0$.
\newline 
\indent \underline{\textbf{(8B)}} If $(w_m',w_r)$ has strength $\sigma < \frac{\eps}{2}$, it will be labeled ``small'', and the solution will be a small $1$-shock composed with a small $2$-shock. Here, $\Delta S(t^*)=w_m'-w_l+w_m'-w_r \leq \eps$, and $\Delta B(t^*)=-2(w_l-w_m-w_m'+w_r) < 0$.
\newline
\underline{\textbf{(9): d)+d)}} The solution will be a possibly partitioned $1$-rarefaction composed with a $2$-shock. Here, we split into two cases.
\newline
\indent \underline{\textbf{(9A)}} If $(w_m',w_r)$ has strength $\sigma < \eps$, it will remain ``small'', and the solution will be a possibly partitioned $1$-rarefaction composed with a small $2$-shock. Here, $\Delta S(t^*)=w_l-w_m' < 0$, and $\Delta B(t^*)=0$.
\newline
\indent \underline{\textbf{(9B)}} If $(w_m', w_r)$ has strength $\sigma \geq \eps$, it will become ``big'', and the solution will be a possibly partitioned small $1$-rarefaction composed with a big $2$-shock. Here, $\Delta S(t^*)=-(w_l-w_r) \leq -\eps$, and $\Delta B(t^*)=0$.
\newline
\underline{\textbf{(10): d)+f)}} Here, we split into two cases.
\newline
\indent \underline{\textbf{(10A)}} If $(w_m', w_r)$ is a $2-$shock, the solution will be a small $1$-shock composed with a small $2$-shock. Here, $\Delta S(t^*)=2w_m'-2w_l-w_r+w_m$, while $\Delta B(t^*)=-2(w_l-w_m+w_r-w_m') < 0$.
\newline
\indent \underline{\textbf{(10B)}} If $(w_m', w_r)$ is a $2$-rarefaction, the solution will be a small $1-$shock composed with a $2$-rarefaction. Here, $\Delta S(t^*)=w_m'-w_l-(w_l-w_m) \leq 0$, while $\Delta B(t^*)=-2(w_l-w_m) < 0$.

\subsection{Construction of the wave pattern}\label{construction}
The scheme proceeds as follows. Recall that in the context of Proposition \ref{mainprop}, one starts with initial data $u^0 \in BV(\R) \cap L^\infty(\R)$ and $\tau_0 \geq \beta$. Fix a small parameter $\delta > 0$. Firstly, one approximates the initial data $u^0$ by a piecewise constant initial data $\psi^0_\delta=(\tau^0_{\delta}, v^0_{\delta})$ such that:
\begin{align}\label{desired}
    &||\psi^0_\delta||_{BV(\R)} \leq 2||u^0||_{BV(\R)}, \notag \\
    &||\psi^0_\delta||_{L^\infty(\R)} \leq ||u^0||_{L^\infty(\R),}, \\
    &\tau^0_{\delta}(x) \geq \beta, \indent x \in \R,\notag 
\end{align}
For example, one may fix $R>0$ and take averages on a partition of $[-R,R]$ with mesh size $\delta$ as the approximating functions. Then, the second and third properties of \eqref{desired} will hold, and the first will hold for $\delta$ sufficiently small. This will additionally grant:
\begin{align*}
||\psi^0_\delta-u^0||_{L^2([-R,R])} \to 0 \text{ as } \delta \to 0.
\end{align*}
Then at $t=0$, we solve the Riemann problems according to the solution dictated in Section \ref{solver0} and let the waves propagate. All shocks with strength $\sigma \geq \eps$ are initially labeled as ``big'', while all shocks with strength $\sigma < \eps$ are initially labeled as ``small''. After this, when any waves interact at a time $t=t^* > 0$, we locally solve the Riemann problem and classify outgoing shocks according to the classification dictated in Section \ref{solvert}. We repeat this ad infinitum, and define a function $\psi_\delta(t,x)=(\tau_\delta(t,x),v_\delta(t,x))$ as the piecewise constant function generated by this process. We also define $w_\delta(t,x):=-\ln(\tau_\delta(t,x))$. Note that by slightly perturbing the speed of waves, we may assume that at each time there is at most one collision involving only two waves.
\par We need to show that for any finite time $T>0$, the front tracking algorithm is well-defined in $[0,T] \times \R$, i.e. that the number of wave interactions does not explode, and also that the total variation and $L^\infty$ norm are uniformly bounded in $\delta$. This is summarized in the following lemma. 
\begin{lemma}\label{wellposed}
Let $T > 0$. Then, the number of interactions in the region $[0,T] \times \R$ is finite. Further, there exist a constant $M=M(||u^0||_{L^\infty(\R)}, ||u^0||_{BV(\R)}, \beta)>0$ and a compact set $\mathcal{D}=\mathcal{D}(||u^0||_{L^\infty(\R)}, ||u^0||_{BV(\R)}, \beta) \subset \R^+ \times \R$
%and $\gamma=\gamma(||u^0||_{L^\infty(\R)}, ||u^0||_{BV(\R)}, \beta)> 0$
such that for any $\delta > 0$
\begin{align*}
    &\psi_\delta(t,x) \in \mathcal{D} \indent \text{ for any } (t,x) \in \R^+ \times \R, \\
    &||\psi_\delta||_{L^\infty([0,T];BV(\R))} \leq M||u^0||_{BV(\R)}, \\
    &||\psi_\delta(t,\cdot)-\psi_\delta(s,\cdot)||_{L^1(\R)} \leq M|t-s|, \indent 0<s<t<T. \\
    %&\psi_\delta^1(t, x) \geq \gamma, \indent x \in \R \text{ and } 0<s<t<T.
\end{align*}
\end{lemma}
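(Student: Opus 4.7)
The plan is to leverage the TVD property of $w$ (Lemma \ref{tvd}) together with the interaction table of Section \ref{solvert}. The state-space containment and uniform $BV$ estimate follow directly from the TVD property, the $L^1$-Lipschitz bound follows from uniform front speeds, and the main obstacle — finiteness of interactions for each fixed $\delta$ — requires a Glimm-type interaction potential.

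\textbf{State space and $BV$ bound.} Partitioning the initial rarefactions into pieces of width $\delta$ preserves the total $w$-variation, so $B(0+) \leq ||w_\delta^0||_{BV(\R)} \leq C(\beta,||u^0||_{L^\infty(\R)})\,||u^0||_{BV(\R)}$ via $w=-\ln\tau$ and $\tau_\delta^0 \geq \beta$. Lemma \ref{tvd} gives $B(t) \leq B(0+)$ for all $t > 0$, and a pointwise argument bounds $w_\delta(t,x)$ between $-\ln(||u^0||_{L^\infty(\R)})-B(0+)$ and $-\ln(\beta)+B(0+)$, producing a uniform upper bound on $\tau_\delta$ and a strictly positive lower bound. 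For $v_\delta$, each jump across an $i$-wave of strength $\sigma$ equals $\mp W(\sigma)$ with $|W(\sigma)| \leq 2\sinh(|\sigma|/2)$; by convexity of $\sinh$, the total variation of $v_\delta(t,\cdot)$ is bounded by $2\sinh(B(0+)/2)$, so $||v_\delta||_{L^\infty(\R)}$ is controlled. This fixes $\mathcal{D}$ in terms of $(||u^0||_{L^\infty(\R)},||u^0||_{BV(\R)},\beta)$ and yields $||\psi_\delta||_{L^\infty([0,T];BV(\R))} \leq M\,||u^0||_{BV(\R)}$.

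\textbf{Finiteness of interactions.} This is the main obstacle. I would adapt the classical Glimm functional of Nishida \cite{MR236526} and Asakura \cite{MR2126567}. Define
\begin{align*}
Q(t) = \sum_{\{(i,j):\text{ approaching fronts}\}} |\sigma_i|\,|\sigma_j|, \qquad \Phi(t) = B(t) + KQ(t),
\end{align*}
for $K$ large enough, depending on the interaction bounds extracted from Section \ref{solvert}. For the strictly $B$-dissipating interactions (types (3), (5), (8), (10)), $\Delta\Phi \leq \Delta B < 0$ is immediate. For the $\Delta B = 0$ interactions (types (1), (2), (4), (6), (7), (9)), the decrease in $Q$ from the annihilation or weakening of the two approaching fronts dominates any $Q$-increase from newly-created outgoing fronts, provided $K$ is sufficiently large. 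Since each rarefaction front has strength $\geq \delta$ and each non-trivial interaction produces a decrement in $\Phi$ bounded below by some $c(\delta) > 0$, while $\Phi(0+) < \infty$ and $\Phi \geq 0$, the number of interactions in $[0,T] \times \R$ is at most $\Phi(0+)/c(\delta)$, hence finite.

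\textbf{Lipschitz-in-time $L^1$ bound.} With $\mathcal{D}$ fixed, all fronts propagate with uniformly bounded speeds: shocks by $\hat{\lambda}$ (Proposition \ref{shockstability}), and rarefactions by $\sup_{\mathcal{D}}|\lambda_{1,2}| = \sup_{\mathcal{D}} 1/\tau < \infty$. Letting $\Lambda$ be this common bound, the region $\{\psi_\delta(s,\cdot) \neq \psi_\delta(t,\cdot)\}$ is contained in a union of strips of width $\leq \Lambda|t-s|$ around each front, so
\begin{align*}
||\psi_\delta(t,\cdot)-\psi_\delta(s,\cdot)||_{L^1(\R)} \leq \Lambda|t-s|\cdot||\psi_\delta||_{L^\infty([0,T];BV(\R))} \leq M|t-s|,
\end{align*}
completing the proof.
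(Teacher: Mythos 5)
Your first and third steps (the $L^\infty$/$BV$ bounds via the TVD property of $w$ and the wave-curve structure, and the $L^1$-Lipschitz-in-time bound via uniformly bounded front speeds) are essentially the paper's argument and are fine, modulo the minor point that solving the Riemann problems at $t=0$ can increase the variation in $w$ by a constant factor (since $D(w_l,w_r)\geq |w_l-w_r|$), so $B(0+)\leq \|w^0_\delta\|_{BV(\R)}$ should be $B(0+)\leq C\|w^0_\delta\|_{BV(\R)}$.

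The finiteness-of-interactions step, however, has a genuine gap. The Glimm functional $\Phi=B+KQ$ with $Q=\sum_{\text{approaching}}|\sigma_i||\sigma_j|$ does not work in the large-$BV$ regime, and this is precisely the obstruction that forces Nishida and Asakura to abandon the quadratic potential for this system. At an interaction with $\Delta B=0$ (e.g.\ type 1: two big $1$-shocks merging and emitting a $2$-rarefaction), the loss in $Q$ is the single term $|\sigma_1||\sigma_2|$, but the newly created reflected wave of strength $\sigma_3$ forms approaching pairs with every front it will later meet, contributing an increase of order $|\sigma_3|\cdot V$ where $V$ is the total variation. For large shocks $\sigma_3$ is not quadratically small in $\sigma_1,\sigma_2$, so $\Delta Q$ can be positive when $V$ is large; and since $\Delta B=0$ here, no choice of $K$ (large or small) rescues $\Delta\Phi\leq 0$. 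Separately, your claim that every non-trivial interaction decrements $\Phi$ by at least $c(\delta)>0$ is false: a type 4A interaction between two $1$-shocks of strength $10^{-100}$ produces an arbitrarily small decrement, so $\Phi(0+)/c(\delta)$ does not bound the number of interactions.

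The paper takes a different, and in this setting necessary, route: it invokes a combinatorial lemma (Lemma 2.5 in \cite{MR1820292}) which reduces finiteness of the wave pattern to showing that only \emph{finitely many} interactions produce more than one outgoing front per family. An interaction produces a partitioned rarefaction only if the outgoing rarefaction has strength $>\delta$, and the case analysis of Section \ref{solvert} shows this forces either $\Delta S(t^*)\leq -\delta$ or the creation/destruction of a big shock; Lemmas \ref{bigsmall} and \ref{Sdissipationbd} (the bounded positive and negative variation of the small-shock functional $S$) then bound the number of such events by quantities like $\ceil{\frac{K_0+V}{\delta}}$. You should replace the interaction potential with this counting argument, or at minimum restrict the potential argument to the small subclass of interactions that actually increase the number of fronts.
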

A proof is given in Appendix \ref{appendix}. The next two lemmas will be used to control the decay of the weight function defined in Section \ref{weight}. Let $V$ be an upper bound on $||w_\delta(0+,\cdot)||_{BV(\R)}$, uniform in $\delta$, which only depends on $||u^0||_{L^\infty(\R)}, ||u^0||_{BV(\R)}$, and $\beta$ (cf. Appendix \ref{appendix}).
\begin{lemma}\label{bigsmall}
For any front tracking pattern in $[0,T] \times \R$, with initial data $\psi_\delta^0$ satisfying \eqref{desired}, when a big shock turns into a small shock, $B$ has decreased by at least $\eps$. Thus, this cannot happen more than $\ceil{\frac{V}{\eps}}$ times. In particular, the number of type 3B and 8B interactions is bounded by $\ceil{\frac{V}{\eps}}$.
\end{lemma}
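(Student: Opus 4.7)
The only interactions where a big shock is relabeled as small are types $3B$ and $8B$, which are mirror images, so I treat $3B$ and let $8B$ follow by symmetry. From Section \ref{solvert}, a $3B$ event contributes $\Delta B(t^*) = -2(\sigma_1 - \beta)$, where $\sigma_1 = w_m - w_l$ is the strength of the incoming big $1$-shock (satisfying only $\sigma_1 \geq \eps/2$) and $\beta = w_m' - w_l$ is the strength of the outgoing small $1$-shock (satisfying $\beta < \eps/2$). These constraints do not force $|\Delta B(t^*)| \geq \eps$ in a single step, since $\sigma_1$ may have been weakened down to nearly $\eps/2$ by earlier $3A$ interactions while still retaining the label big. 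I therefore propose to accumulate the required $\eps$ loss over the shock's entire ``big lifetime.''

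I associate to each big shock a lifetime beginning when the shock is labeled big--either at $t=0$ with strength $\geq \eps$, or as the outgoing shock of a type $4B$ or $9B$ interaction, both of which require strength $\geq \eps$--and ending at either a merger with another big shock of the same family (type $1$ or $6$) or the $3B$/$8B$ event at which it becomes small. For mergers I fix a convention (say, the left incoming shock is the continuation) so that each lifetime is well-defined. Scanning the interaction catalog in Section \ref{solvert}, during a big $1$-shock's lifetime its strength can only weakly increase in types $1$ and $2$, both of which have $\Delta B = 0$, or decrease in types $3A$ and $3B$, where $|\Delta B| = 2 \Delta \sigma$ with $\Delta\sigma$ the decrease in the shock's strength. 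The analogous list (types $6, 7, 8A, 8B$) governs big $2$-shocks.

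Since the shock's strength begins its lifetime at $\geq \eps$ and ends it at $< \eps/2$, the net decrease strictly exceeds $\eps/2$; because strength increases contribute nothing to $|\Delta B|$, the total strength-decrease over the $3A$/$3B$ (or $8A$/$8B$) events of this lifetime is at least the net decrease, hence $> \eps/2$. Multiplying by two gives a total $|\Delta B|$ contribution exceeding $\eps$. Each such event involves exactly one big shock, so these contributions are disjoint across different lifetimes. Summing over the $N$ big-to-small transitions in $[0,T] \times \R$ and invoking Lemma \ref{tvd} (so that $B$ is non-increasing with $B(0+) \leq V$) yields $\eps N \leq V$, i.e., $N \leq \ceil{V/\eps}$. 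The only real obstacle is the bookkeeping for mergers; once the continuation convention is fixed, the disjointness of attributions and the telescoping estimate follow immediately from the interaction catalog.
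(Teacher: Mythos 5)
Your proposal is correct and follows essentially the same route as the paper's proof: the paper likewise accumulates the strength loss of a big shock over its lifetime (from birth at strength $\geq \eps$ to relabeling at strength $< \eps/2$), uses that the decrease of $B$ in type 3/8 interactions is exactly twice the decrease of the shock's strength, and concludes that each big-to-small transition costs $B$ at least $\eps$. Your version merely makes explicit the lifetime bookkeeping (mergers, monotonicity under the other interaction types) that the paper leaves implicit.
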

\begin{proof}
Recall that shocks will only become big once they have strength $\sigma \geq \eps$. Then, the strength of a big shock may only decrease through an interaction of type 3 or 8. Notice that in those interactions, the decrease of $B$ is exactly twice the decrease in the strength of the big shock. So, in order for a big shock to become small, it must lose at least $\frac{\eps}{2}$ strength, which implies $B$ has decreased by at least $\eps$. Thus, this cannot happen more than $\ceil{\frac{V}{\eps}}$ times. 
\end{proof}

\begin{lemma}\label{Sdissipationbd}
Define
\begin{align*}
\Delta_+S(t):=\max(0,\Delta S(t)).
\end{align*}
Then, for any front tracking pattern in $[0,T] \times \R$, with initial data $\psi_\delta^0$ satisfying \eqref{desired}, we have a bound
\begin{align*}
\sum_{t>0} \Delta_+S(t) \leq K_0,
\end{align*}
for some $K_0>0$, depending only on $||u^0||_{L^\infty(\R)}, ||u^0||_{BV(\R)}$, and $\beta$. Further, we also have
\begin{align*}
\sum_{t>0} \Delta_-S(t) \leq K_0+V,
\end{align*}
where $\Delta_-S(t)=-\min(0,\Delta S(t))$. In particular, the number of type 4B and 9B interactions is bounded by $\ceil{\frac{K_0+V}{\eps}}$.
\end{lemma}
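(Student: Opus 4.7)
My plan is a case-by-case analysis of the non-trivial interactions catalogued in Section~\ref{solvert}, combined with the TVD monotonicity of $B$ given by Lemma~\ref{tvd}. Inspecting that table, $\Delta S(t) > 0$ is possible only in types 3A, 3B, 5A, 8A, 8B, 10A, and in every one of these, $\Delta B(t) < 0$. For types 3B and 8B, the bound $\Delta S(t) \leq \eps$ is already recorded in Section~\ref{solvert}, and Lemma~\ref{bigsmall} bounds their combined count by $\lceil V/\eps \rceil$, so their contribution to $\sum_t \Delta_+ S(t)$ is at most $V + \eps$.

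The heart of the argument is to show $\Delta_+ S(t) \leq C_V \cdot (-\Delta B(t))$ at each interaction of types 3A, 5A, 8A, or 10A, where $C_V$ depends only on $V$ (hence only on $||u^0||_{L^\infty(\R)}$, $||u^0||_{BV(\R)}$, and $\beta$). At any such interaction, writing the consistency of $v_l - v_r$ computed through both the incoming and outgoing waves yields an identity of the form
\begin{align*}
2\sinh(\alpha/2) - 2\sinh(\alpha'/2) = \beta + 2\sinh(\beta'/2)
\end{align*}
for types 3A and 5A, and a reflected version for 8A and 10A, to be combined with the corresponding $w$-consistency identity ($\alpha - \alpha' = \beta - \beta'$ for same-family incoming waves, $\alpha + \alpha' = \beta + \beta'$ for cross-family). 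Using the elementary bounds $s \leq 2\sinh(s/2) \leq \cosh(V/2)\,s$ on $[0, V]$, one then extracts the new small shock strength as bounded by $C_V$ times the geometric displacement $|w_m - w_m'| = -\Delta B(t)/2$.

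Summing the per-interaction estimate and using that $B$ is TVD with $B(0+) \leq V$, so $\sum_t (-\Delta B(t)) \leq V$, yields
\begin{align*}
\sum_{t>0} \Delta_+ S(t) \leq C_V V + (V + \eps) =: K_0.
\end{align*}
The bound on $\sum_t \Delta_- S(t)$ is then immediate from $S(t) \geq 0$ and $S(0+) \leq V$:
\begin{align*}
\sum_{t>0} \Delta_- S(t) = S(0+) - S(T+) + \sum_{t>0} \Delta_+ S(t) \leq K_0 + V.
\end{align*}
Since every type 4B or 9B interaction has $\Delta_- S(t) \geq \eps$ by the case list in Section~\ref{solvert}, the number of such interactions is at most $(K_0 + V)/\eps$, giving the stated $\lceil (K_0+V)/\eps \rceil$ bound.

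The main obstacle I anticipate is the case-work, and in particular type 8A (and its analog 10A). The outgoing big $2$-shock of strength $B_0'$ can be as large as $V$, so the naive monotonicity of $\sinh$ does not immediately give a bound linear in $-\Delta B(t)$. The trick is to exploit the $w$-consistency identity $A + A' = B_0 + B_0'$ to rewrite $A - B_0' = B_0 - A' = -\Delta B(t)/2$, and apply the Lipschitz estimate for $2\sinh(\cdot/2)$ to this particular difference rather than to $A - A'$. After rearrangement this gives $A'(1 + C_V) \leq (C_V - 1) B_0$, which in turn yields $B_0 \leq \frac{C_V + 1}{4}(-\Delta B(t))$ and hence $\Delta_+ S(t) = A' \leq \frac{C_V - 1}{4}(-\Delta B(t))$, as required.
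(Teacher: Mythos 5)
Your proposal is correct and follows essentially the same route as the paper: the key step in both is to bound $\Delta_+S$ at each type 3A/5A (and reflected 8A/10A) interaction by a constant times $-\Delta B$, using the Lipschitz bound on $W(s)=2\sinh(s/2)$ over the compact range of $w$ (your $\cosh(V/2)$ is the paper's $C_3$), then sum via the TVD property of $B$, handling 3B/8B separately through Lemma \ref{bigsmall}. The only cosmetic differences are that the paper dispatches 8A/10A by reflection symmetry rather than redoing the identities, and leaves the $\Delta_-S\leq K_0+V$ step implicit where you write it out.
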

\begin{proof}
Assume that an interaction in which $\Delta S(t^*) > 0$ occurs at $t=t^*$. This must be a type 3A, 3B, 5A, 8A, 8B, or 10A interaction. Without loss of generality (as interactions of type $i$ and $i-5$ are simply reflections of one another), we will assume it is of type 3A, 3B or 5A. In all three cases, we will show that $\Delta S(t^*)$ is controlled by a decrease in $B$.
\par Firstly, assume it is a type 3A interaction. Then, $\Delta S(t^*)=w_m'-w_r$, while $\Delta B(t^*)=-2(w_m-w_m')$. Denote by $k_1$ the point on the $2$-shock curve emanating from $w_m'$ with $v$-coordinate $v_m$, and denote by $k_2$ the point on the $1$-shock curve emanating from $w_m'$ with $v$-coordinate $v_m$. Let $w_{1,2}$ be the projections of $k_{1,2}$ onto the $w-$axis. Then, we have:
\begin{align}\label{step1}
    &w_m'-w_r < w_m'-w_1, \notag \\
    &w_2-w_m'=w_m'-w_1.
\end{align}
Further, as shown in the proof of Lemma \ref{wellposed}, there exists $K_1 > 0$ such that $|w_\delta(t,x)| \leq K_1$ for any $t,x \in [0,T] \times \R$. Using $1 \leq |W'(s)| \leq C_3$ for $|s| \leq 2K_1$, we obtain
\begin{align}\label{step2}
    w_2-w_m' \leq C_3(w_m-w_m').
\end{align}
Together, \eqref{step1} and \eqref{step2} imply
\begin{align}\label{step3}
    w_m'-w_r < C_3(w_m-w_m').
\end{align}
But \eqref{step3} implies
\begin{align*}
\Delta S(t^*) \leq C_3|\Delta B(t^*)|.
\end{align*}
So, we obtain
\begin{align}\label{case3A}
\sum_{t:\text{type 3A}}\Delta S(t) \leq C_3V
\end{align}
Now, assume the interaction is of type 3B. Then, $\Delta S(t) \leq \eps$, but by Lemma \ref{bigsmall}, this may only occur at most $\ceil{\frac{V}{\eps}}$ times. So, by taking $\eps \leq V$
\begin{align}\label{case3B}
\sum_{t:\text{type 3B}}\Delta S(t) \leq \eps \ceil{\frac{V}{\eps}} \leq V+\eps \leq 2V.
\end{align}
The case when the interaction is of type 5A is similar to the 3A case. Here, we obtain
\begin{align}\label{case5A}
\sum_{t:\text{type 5A}}\Delta S(t) \leq (C_3+1)V  .
\end{align}
Consolidating \eqref{case3A}, \eqref{case3B}, and \eqref{case5A}, we obtain
\begin{align*}
\sum_{t > 0}\Delta S(t) \leq 2(C_3+1)V.
\end{align*}
This grants the result with $K_0=2(C_3+1)V$. Finally, if a big shock is created, in either a type 4B or 9B interaction, $\Delta S(t^*)\leq -\eps$. Thus, we conclude that this can only happen at most $\ceil{\frac{K_0+V}{\eps}}$ times.
\end{proof}

\section{Construction of the weight $a$}\label{weight}
In this section, we construct the weight function that will be used to measure the stability. The weight will be defined as follows.
\begin{align*}
a(t,x)=\left(C_1^{L(t)}\right)\left(Q(t)\right)\left(\displaystyle \prod_{i:\text{shock}}\xi_i(t,x)\right),
\end{align*}
where the functions $L(t), Q(t), $ and $\xi_i(t,x)$ are defined as follows.
\newline \underline{\textbf{L(t):}} The function $L(t)$ is defined as 
\begin{align*}
    L(t)=\sum \limits_{\mathclap{\substack{0 < t^* <t: \text{type 1, 3B,} \\
    \text{or 9B interaction at $t^*$}}}}1.
\end{align*}
The value of the function $L$ simply goes up by $1$ every time an interaction of type 1, 3B, or 9B occurs.
\newline \underline{\textbf{Q(t):}} The function $Q(t)$ is defined as 
\begin{align*}
    Q(t)=\displaystyle \prod_{k_i \in K(t)}k_i,
\end{align*}
where the set $K(t)$ is defined as follows. Initially, $K(0+)=\emptyset$. The set $K(t)$ consists of ``small shock numbers'' accumulated as waves interact at $t=t^* >0$ in the following manner. Recall that without loss of generality we assume that in an interaction of type m)+n), the wave $w_l, w_m$ is of type m), and the wave $w_m,w_r$ is of type n).
\begin{enumerate}
\item If the interaction is of type 2, $K(t^*+)=K(t^*-) \cup \{1-C_0(w_r-w_m)\}$, where $C_0$ is the constant in Proposition \ref{shockstability}.
\item If the interaction is of type 3A,  $K(t^*+)=K(t^*-) \cup \{\frac{1}{1+C_0(w_m'-w_r)}\}$.
\item If the interaction is of type 4A,  $K(t^*+)=K(t^*-) \cup \{\min{(\frac{(1-C_0(w_m-w_l))(1-C_0(w_r-w_m))}{1-C_0(w_m'-w_l)},1)}\}$.
\item If the interaction is of type 5A,  $K(t^*+)=K(t^*-) \cup \{\frac{1-C_0(w_m-w_l)}{(1-C_0(w_m'-w_l))(1+C_0(w_m'-w_r))}\}$.
\item If the interaction is of type 5B, $K(t^*+)=K(t^*-) \cup \{\frac{1-C_0(w_m-w_l)}{1+C_0(w_m'-w_r)}\}$.
\end{enumerate}
\underline{$\boldsymbol{\xi_i(t,x)):}$} Finally, a shock wave $x_i(t)$ with strength $\sigma_i$ has the associated function $\xi_i(t,x)$:
\begin{align*}
\xi_i(t,x)=\begin{cases}
            \ind_{\{x < x_i(t)\}}(x)+(1-C_0\sigma_i)\ind_{\{x > x_i(t)\}}(x) & \text{if $x_i(t)$ is a small 1-shock}, \\
            \ind_{\{x < x_i(t)\}}(x)+(1+C_0\sigma_i)\ind_{\{x > x_i(t)\}}(x) & \text{if $x_i(t)$ is a small 2-shock}, \\
            \ind_{\{x < x_i(t)\}}(x)+C_1\ind_{\{x > x_i(t)\}}(x) & \text{if $x_i(t)$ is a big 1-shock}, \\
            \ind_{\{x < x_i(t)\}}(x)+\frac{1}{C_1}\ind_{\{x > x_i(t)\}}(x) & \text{if $x_i(t)$ is a big 2-shock}.
         \end{cases}
\end{align*}
Clearly, the component of the function $a$ involving the weights $\xi_i$ is the portion that grants the $L^2$-stability. The other two components are needed in order for the weight to be decreasing in time across wave interactions (this is proved later, in Proposition \ref{aproperties}).

The next two lemmas give bounds on the first two components of the function $a$ uniform in $\delta$. 
\begin{lemma}\label{Lproperties}
The function $L$ is monotonically increasing in time. Further, if there is a wave interaction at a time $t=t^* > 0$
\begin{align}\label{deltaL}
    \Delta L(t^*)=
    \begin{cases}
        1 &  \text{ interaction is of type 1, 3B, or 9B} \\
        0 &  \text{ otherwise.}
    \end{cases}
\end{align}
Finally, there is a bound $L(t) \leq \Lambda$ for some $\Lambda > 0$ uniform in $\delta$.
\end{lemma}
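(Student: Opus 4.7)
The first two assertions fall out essentially by inspection of the definition of $L(t)$. The sum defining $L(t)$ has an expanding index set as $t$ grows, which gives monotonicity. At a wave collision at $t=t^*$, exactly one new term is added to the sum precisely when that collision is of type 1, 3B, or 9B, which is the content of \eqref{deltaL}. So the substantive content is the uniform bound $L(t) \leq \Lambda$.

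To establish the uniform bound, the plan is to control the total number of interactions of each of the three types separately, showing each count is bounded by a constant depending only on $\|u^0\|_{L^\infty(\R)}, \|u^0\|_{BV(\R)},$ and $\beta$ (via the bounds $V$, $K_0$, and the fixed parameter $\epsilon$). For type 3B, Lemma \ref{bigsmall} directly gives a bound of $\lceil V/\epsilon \rceil$, since every type 3B interaction converts a big shock to a small one. For type 9B, Lemma \ref{Sdissipationbd} directly furnishes a bound of $\lceil (K_0+V)/\epsilon \rceil$.

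The main obstacle is the bound on type 1 interactions, since in such interactions $S$ and $B$ are unchanged, so neither Lemma \ref{bigsmall} nor Lemma \ref{Sdissipationbd} applies directly. I would bound them by tracking the total number of big $1$-shocks that can ever appear in the pattern. Inspecting the classification in Section \ref{solvert}, big $1$-shocks are present initially, and they can be newly created only in interactions of type 4B (the only interaction producing a big $1$-shock that was not a big $1$-shock before). Since each initial big $1$-shock has strength at least $\epsilon$ and the initial variation in $w$ is bounded by $V$, there are at most $V/\epsilon$ initial big $1$-shocks, and Lemma \ref{Sdissipationbd} bounds the number of type 4B interactions by $\lceil (K_0+V)/\epsilon \rceil$. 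On the other hand, every type 1 interaction (two big $1$-shocks collide to produce a single big $1$-shock plus a $2$-rarefaction) strictly decreases the number of big $1$-shocks in the pattern by one, and the other interactions involving big $1$-shocks (types 2, 3A, and the trivial ones) do not create new ones. Hence the number of type 1 interactions is bounded above by the total number of big $1$-shocks that ever appear, namely $V/\epsilon + \lceil (K_0+V)/\epsilon \rceil$.

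Combining the three bounds yields $L(t) \leq \Lambda$ for
\begin{equation*}
\Lambda := \left\lceil \tfrac{V}{\epsilon} \right\rceil + 2\left\lceil \tfrac{K_0+V}{\epsilon} \right\rceil + \tfrac{V}{\epsilon},
\end{equation*}
which depends only on $\|u^0\|_{L^\infty(\R)}, \|u^0\|_{BV(\R)}, \beta$, and hence is uniform in $\delta$. The delicate step is the type 1 counting argument; the rest is bookkeeping from the previous two lemmas.
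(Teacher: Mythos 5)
Your proposal is correct and follows essentially the same route as the paper: the first two claims by inspection, type 3B via Lemma \ref{bigsmall}, type 9B via Lemma \ref{Sdissipationbd}, and type 1 by counting the total number of big shocks that can ever appear (initially at most $\ceil{V/\eps}$, with new ones created only in type 4B/9B interactions, bounded by Lemma \ref{Sdissipationbd}). Your refinement of tracking only big $1$-shocks rather than all big shocks is a harmless variant of the paper's identical counting argument.
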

\begin{proof}
The property \eqref{deltaL} is obvious from the definition. Next, we show that $L$ is uniformly bounded in time. We show that the number of possible interactions of types 1, 3B, and 9B respectively is bounded uniformly in $\delta$. 
\par Whenever a type 1 interaction occurs, the number of big shocks goes down by 1. However, the number of times a big shock can be created is finite. Indeed, a big shock can only be created in a type 4B or 9B interaction. By Lemma \ref{Sdissipationbd}, we conclude that this can only happen at most $\ceil{\frac{K_0+V}{\eps}}$ times. Thus, the number of type 1 interactions is bounded by $\ceil{\frac{V}{\eps}}+\ceil{\frac{K_0+V}{\eps}}$.
\par Next, the number of type 3B interactions is bounded by $\ceil{\frac{V}{\eps}}$ by Lemma \ref{bigsmall}.
\par Finally, by Lemma \ref{Sdissipationbd}, a type 9B interaction may occur maximally $\ceil{\frac{K_0+V}{\eps}}$ times. So, there is a bound on $L(t)$ uniform in $\delta$
\begin{align*}
    L(t) \leq 2(\ceil{\frac{K_0+V}{\eps}}+\ceil{\frac{V}{\eps}}).
\end{align*}
\end{proof}

\begin{lemma}\label{Qproperties}
The function $Q$ is monotonically decreasing in time. Further, there is a bound $Q(t) \geq k$ for some $k > 0$ uniform in $\delta$. 
\end{lemma}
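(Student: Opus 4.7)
The plan is to establish two claims: that $Q$ is monotonically decreasing, and that there is a uniform-in-$\delta$ lower bound $Q(t) \geq k > 0$.

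\emph{Monotonicity.} It suffices to verify that every factor $k_i$ added to $K(t)$ satisfies $k_i \leq 1$. For cases 2, 3A, and 5B this is immediate from the sign conventions on $w_r - w_m$, $w_m' - w_r$, and $w_m - w_l$; case 4A is trivial because of the $\min(\cdot,1)$. The only delicate case is 5A, where the inequality reduces to $w_m' \leq w_m$. This in turn follows directly from Lemma \ref{tvd} applied with $w_1 = w_l, w_2 = w_m, w_3 = w_r$: combining $D(w_l,w_r) = 2w_m' - w_l - w_r$ with $D(w_l,w_m) + D(w_m,w_r) = 2w_m - w_l - w_r$ yields $w_m' \leq w_m$, hence $1 - C_0(w_m' - w_l) \geq 1 - C_0(w_m - w_l)$ and so $k_i \leq \frac{1}{1+C_0(w_m'-w_r)} \leq 1$. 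The symmetric cases involving $2$-shocks and $2$-rarefactions are handled identically.

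\emph{Uniform lower bound.} Taking logarithms, it is enough to bound $-\log Q(t) = \sum_i -\log k_i$ by a constant independent of $\delta$. The plan is to show that at each interaction contributing to $K$ one has $-\log k_i \leq C(|\Delta B(t^*)| + \Delta_+ S(t^*) + \Delta_- S(t^*))$ for some absolute constant $C = C(C_0)$, and then sum, exploiting the global summability of each of the three quantities. The per-interaction estimates are based on $-\log(1-x) \leq 2x$ and $\log(1+x) \leq x$ for $x \in [0,1/2]$ (here we use the normalization $C_0 \eps \leq 1/2$), combined with the identities from Section \ref{solvert}. In cases 2 and 3A the bound reads off immediately as $2C_0 \Delta_- S$ and $C_0 \Delta_+ S$ respectively. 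For case 4A, writing $\sigma_l, \sigma_r, \sigma_{out}$ for the relevant small $1$-shock strengths and using $\sigma_l + \sigma_r - \sigma_{out} = \Delta_- S$, an algebraic simplification of $(1-C_0\sigma_l)(1-C_0\sigma_r)/(1-C_0\sigma_{out})$ yields $-\log k_i \leq 4 C_0 \Delta_- S$. For 5B, $\Delta B = -2(w_m - w_l)$ together with $w_m' - w_r \leq w_m - w_l$ (the latter from $\Delta S \leq 0$) gives $-\log k_i \leq \tfrac{3}{2} C_0 |\Delta B|$. For 5A, the identity $w_m - w_m' = \tfrac{1}{2}|\Delta B|$ obtained above together with $\Delta S = (w_m' - w_r) - (w_m - w_m')$ lets us bound the modified $1$-shock contribution by $C_0 |\Delta B|$ and the new $2$-shock contribution by $C_0(\Delta_+ S + \tfrac{1}{2}|\Delta B|)$. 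Summing,
\[
-\log Q(t) \leq C\left(\sum_{t^*} |\Delta B(t^*)| + \sum_{t^*} \Delta_+ S(t^*) + \sum_{t^*} \Delta_- S(t^*)\right) \leq C(2V + 2K_0),
\]
where $\sum |\Delta B| \leq V$ because $B$ is monotonically nonincreasing from $V$, and the latter two sums are controlled by Lemma \ref{Sdissipationbd}. This gives $Q(t) \geq e^{-C(2V + 2K_0)} =: k > 0$, with $k$ depending only on $\|u^0\|_{L^\infty(\R)}$, $\|u^0\|_{BV(\R)}$, and $\beta$.

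\emph{Main obstacle.} The hard case is 5A, in which a small $1$-shock meets a $1$-rarefaction and produces both a smaller $1$-shock of the same family and a fresh small $2$-shock; here $-\log k_i$ receives contributions from both a modified $1$-shock weight and a newly created $2$-shock weight, and neither $|\Delta B|$ nor $\Delta_\pm S$ alone controls it. The resolution uses Lemma \ref{tvd} to tie the change in the $1$-shock weight to $|\Delta B|$, and the identity $\Delta S = (w_m' - w_r) - (w_m - w_m')$ to split the new $2$-shock strength into a $\Delta_+ S$ piece and a $|\Delta B|$ piece. The same device dispatches the symmetric interaction of a small $2$-shock with a $2$-rarefaction.
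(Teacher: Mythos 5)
Your proof is correct and follows essentially the same route as the paper: a case-by-case bound on each weight's deviation from $1$ in terms of $|\Delta B|$ and $\Delta_\pm S$, summed globally via Lemma \ref{Sdissipationbd} and the monotone decay of $B$, with the logarithm inequalities playing the role of the paper's Lemma \ref{productlemma}. The only (harmless) deviation is that in cases 3A and 5A you route the bound through the summability of $\Delta_+S$ where the paper reuses the geometric estimate \eqref{step3}; these are equivalent since Lemma \ref{Sdissipationbd} is itself proved from \eqref{step3}.
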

Before we begin the proof, we give a lemma that will uniformly bound contributions in $Q(t)$ coming from small shocks.
\begin{lemma}\label{productlemma}
Let $\{a_i\}_{i=1}^N$ be a finite set of real numbers with $0 < a_i < \frac{1}{2}$ for all $1 \leq i \leq N$, and $\sum_{i=1}^Na_i \leq K$ for some $K > 0$. Then:
\[
\prod_{i=1}^N(1-a_i) \geq \exp({-2K}),
\]
and
\[
\prod_{i=1}^N(1+a_i) \leq \exp(K).
\]
\end{lemma}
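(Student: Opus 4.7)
The plan is to take logarithms and reduce both inequalities to the corresponding scalar estimates on the interval $(0, 1/2)$, then sum and exponentiate.

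For the upper bound, I would use the standard concavity inequality $\ln(1+x) \leq x$ valid for all $x > -1$. Applying this to each $a_i$, summing, and using the hypothesis $\sum_{i=1}^N a_i \leq K$ yields
\[
\ln\!\left(\prod_{i=1}^N (1+a_i)\right) = \sum_{i=1}^N \ln(1+a_i) \leq \sum_{i=1}^N a_i \leq K,
\]
and exponentiating gives the second claim.

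For the lower bound, the key scalar inequality is $\ln(1-x) \geq -2x$ for $x \in [0, 1/2]$. I would verify this by setting $f(x) := \ln(1-x) + 2x$, noting $f(0) = 0$, and computing $f'(x) = -\tfrac{1}{1-x} + 2 = \tfrac{1-2x}{1-x} \geq 0$ on $[0,1/2]$, so $f$ is nondecreasing on that interval and hence nonnegative. Summing over $i$ and using $\sum a_i \leq K$ gives
\[
\ln\!\left(\prod_{i=1}^N (1-a_i)\right) = \sum_{i=1}^N \ln(1-a_i) \geq -2\sum_{i=1}^N a_i \geq -2K,
\]
and exponentiating yields the first claim.

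There is no real obstacle here; the only point that requires any argument at all is the inequality $\ln(1-x) \geq -2x$ on $[0,1/2]$, and the strict bound $a_i < 1/2$ in the hypothesis is precisely what ensures each $\ln(1-a_i)$ is finite and that the scalar inequality applies termwise. The constant $2$ in the exponent is sharp for this range, since $-\ln(1-x)/x \to 2\ln 2$ as $x \to 1/2^-$; the bound $a_i < 1/2$ in the hypothesis is exactly what is needed to absorb this factor.
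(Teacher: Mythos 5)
Your proof is correct and is essentially the paper's argument: the paper uses the same two scalar inequalities, written multiplicatively as $\exp(-2x)\leq 1-x$ for $0\leq x\leq \tfrac{1}{2}$ and $1+x\leq \exp(x)$, rather than passing through logarithms, and your derivative check of $\ln(1-x)\geq -2x$ just supplies the verification the paper omits. (Only your closing sharpness remark is off: $\sup_{x\in(0,1/2)}\bigl(-\ln(1-x)/x\bigr)=2\ln 2<2$, so the constant $2$ is convenient but not optimal on this range.)
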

\begin{proof}
Using $\exp(-2x) \leq 1-x$ for $0 \leq x \leq \frac{1}{2}$, we obtain
\begin{align*}
    \prod_{i=1}^N(1-a_i) \geq \prod_{i=1}^N\exp(-2a_i)=\exp(-2\sum_{i=1}^Na_i)\geq \exp(-2K).
\end{align*}
On the other hand, using $\exp(x) \geq 1+x$ gives
\begin{align*}
    \prod_{i=1}^N(1+a_i) \leq \exp(K).
\end{align*}
\end{proof}
Now, we give the proof of Lemma \ref{Qproperties}.
\newline \underline{\textbf{Proof of Lemma \ref{Qproperties}:}} The function $Q$ is obviously monotonically decreasing in time as at any interaction time at which a weight is added to $K$, the weight added is less than or equal to one. To show the bound from below, we consider each possible interaction type separately and show that contributions to $Q$ coming from that interaction type are bounded from below.
\begin{enumerate}
    \item Firstly, assume a weight is added via a type 2 interaction. Then, the weight added is $1-C_0(w_r-w_m)$, while $\Delta S(t)=-(w_r-w_m)$. By Lemmas \ref{Sdissipationbd} and \ref{productlemma}, we conclude that for any $t> 0$
    \begin{align*}
        \prod\limits_{\mathclap{\substack{k_i \text{ comes from} \\
        \text{type 2 interaction}}}}k_i \geq \exp(-2C_0(K_0+V)).
    \end{align*}
    \item Assume a weight is added via a type 3A interaction. Then, the weight added is $\frac{1}{1+C_0(w_m'-w_r)}$, while $\Delta B(t)=-2(w_m-w_m')$. Writing $\frac{1}{1+C_0(w_m'-w_r)}=1-y$, we see
    \begin{align*}
    y=\frac{C_0(w_m'-w_r)}{1+C_0(w_m'-w_r)} \leq C_0(w_m'-w_r).
    \end{align*}
    By \eqref{step3} and Lemma \ref{productlemma}, we conclude that for any $t> 0$
    \begin{align*}
        \prod\limits_{\mathclap{\substack{k_i \text{ comes from} \\
        \text{type 3A interaction}}}}k_i \geq \exp(-2C_0C_3V).
    \end{align*}
    \item Assume a weight is added via a type 4A interaction. Then, if the weight added is less than $1$, it is $\frac{(1-C_0(w_m-w_l))(1-C_0(w_r-w_m))}{1-C_0(w_m'-w_l)}$, while $\Delta S(t)=-(w_r-w_m')$. Writing the weight as $1-y$, we see
    \begin{align*}
        y \leq 2C_0(w_r-w_m').
    \end{align*}
    By Lemma \ref{productlemma}, we conclude
    \begin{align*}
        \prod\limits_{\mathclap{\substack{k_i \text{ comes from} \\
        \text{type 4A interaction}}}}k_i \geq \exp(-4C_0(K_0+V)).
    \end{align*}
    \item Assume a weight is added via a type 5A interaction. Then, the weight added is $\frac{1-C_0(w_m-w_l)}{(1-C_0(w_m'-w_l))(1+C_0(w_m'-w_r))}$, while $\Delta B(t)=-2(w_m-w_m')$. Writing the weight as $1-y$, we see
    \begin{align*}
        y \leq 4C_0((w_m'-w_r)+(w_m-w_m')).
    \end{align*}
    Identically to the proof of \eqref{step3}, one may show
    \begin{align*}
        w_m'-w_r \leq C_3 (w_m-w_m'),
    \end{align*}
    so we conclude by Lemma \ref{productlemma}
    \begin{align*}
        \prod\limits_{\mathclap{\substack{k_i \text{ comes from} \\
        \text{type 5A interaction}}}}k_i \geq \exp(-8C_0C_3V).        
    \end{align*}
    \item Finally, assume a weight is added via a type 5B interaction. Then, the weight added is $\frac{(1-C_0(w_m-w_l))}{1+C_0(w_m'-w_r)}$, while $\Delta B(t)=-2(w_m-w_l)$. Writing the weight as $1-y$, 
    \begin{align*}
        y \leq 4C_0(w_m-w_l),
    \end{align*}
    where we have used $w_m'-w_r \leq w_m-w_l$. So we conclude by Lemma \ref{productlemma}
    \begin{align*}
        \prod\limits_{\mathclap{\substack{k_i \text{ comes from} \\
        \text{type 5B interaction}}}}k_i \geq \exp(-4C_0V).        
    \end{align*}
\end{enumerate}
Finally, combining all the cases
\begin{align*}
    Q(t)=\prod_{k_i \in K(t)}k_i \geq \exp(-4C_0V-6C_0(K_0+V)-10C_0C_3V).
\end{align*}

To conclude this section, we list and prove some desirable properties of the function $a$.
\begin{proposition}\label{aproperties}
There exists $\overline{C}, \overline{c} > 0$ such that, uniformly in $\delta$
\begin{align}\label{abounded}
\overline{c} \leq a(t,x) \leq  \overline{C} \text{ for any } (t,x) \in \R^+ \times \R.
\end{align}
Furthermore, for every time $t$ with a wave interaction, and almost every $x$:
\begin{align}\label{amonotone}
a(t+,x) \leq a(t-,x).
\end{align}
Moreover, for every time without a wave interaction, and for every $x$ such that a big shock is located at $x=x_i(t)$:
\begin{align}\label{abigjump}
\frac{a(t,x_i(t)+)}{a(t,x_i(t)-)}=
\begin{cases}
    C_1 & \text{ if $x_i$ is a $1$-shock,} \\
    \frac{1}{C_1} & \text{ if $x_i$ is a $2$-shock,} \\
\end{cases}
\end{align}
and if a small shock of strength $\sigma_i$ is located at $x=x_i(t)$:
\begin{align}\label{asmalljump}
\frac{a(t,x_i(t)+)}{a(t,x_i(t)-)}=\begin{cases}
    1-C_0\sigma_i & \text{ if $x_i$ is a $1$-shock,} \\
    1+C_0\sigma_i & \text{ if $x_i$ is a $2$-shock.} \\
\end{cases}
\end{align}
\end{proposition}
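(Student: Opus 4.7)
The plan is to address the four statements separately, with \eqref{amonotone} constituting the substantive piece.

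First, \eqref{abigjump} and \eqref{asmalljump} follow immediately from the definition. At a time $t$ without a wave interaction, $C_1^{L(t)}$ and $Q(t)$ are constants in $x$, and every $\xi_j$ with $j \neq i$ is continuous across $x = x_i(t)$ (no two shocks coincide away from interaction times). Hence the ratio $a(t, x_i(t)+)/a(t, x_i(t)-)$ coincides with $\xi_i(t, x_i(t)+)/\xi_i(t, x_i(t)-)$, which yields exactly the stated values.

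For \eqref{abounded}, I would bound each of the three factors of $a$ separately. Lemma \ref{Lproperties} gives $C_1^{\Lambda} \leq C_1^{L(t)} \leq 1$ (since $0 < C_1 < 1$), and Lemma \ref{Qproperties} gives $k \leq Q(t) \leq 1$. For the shock product, I would separate the small and big shock contributions. Small shocks satisfy $\sum_i \sigma_i \leq B(t) \leq V$, and since $\sigma_i < \eps$ with $C_0 \eps \leq \frac{1}{2}$, Lemma \ref{productlemma} applied with $a_i = C_0 \sigma_i$ bounds the small $1$-shock product below by $\exp(-2 C_0 V)$ and above by $1$, and symmetrically for small $2$-shocks. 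For big shocks, the total number ever created is bounded uniformly in $\delta$ (as shown already in the proof of Lemma \ref{Lproperties}: at most $\lceil V/\eps \rceil$ from the initial data and at most $\lceil (K_0 + V)/\eps \rceil$ from type 4B and 9B interactions), so the product of the at most finitely many $C_1$ or $1/C_1$ factors from big shocks present at time $t$ is also bounded above and below uniformly in $\delta$.

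The heart of the proof is \eqref{amonotone}. Since wave positions outside a neighborhood of an interaction point $(\bar t, \bar x)$ are unchanged, it suffices to verify $a(\bar t+, x) \leq a(\bar t-, x)$ separately for $x$ far to the left of $\bar x$, far to the right of $\bar x$, and in the small regions between the several outgoing fronts just after $\bar t$. The factors $L$ and $Q$ are defined precisely to compensate for the change in $\prod_i \xi_i$ on the right of $\bar x$, while on the left $\prod_i \xi_i$ is unchanged so only the (weak) decrease of $C_1^{L}$ or $Q$ matters. For example, in a type 3A interaction (big $1$-shock meets $1$-rarefaction producing a new small $2$-shock), on the right of $\bar x$ the product $\prod_i \xi_i$ gains a factor $1 + C_0(w_m' - w_r)$ from the emerging small $2$-shock, while $Q$ picks up precisely its reciprocal $\tfrac{1}{1 + C_0(w_m' - w_r)}$; in a type 1 interaction, two big $1$-shock factors $C_1$ collapse to one on the right, inflating $\prod_i \xi_i$ by $1/C_1$, which is exactly balanced by the new factor of $C_1$ coming from the increment of $L$. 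Each of the other non-trivial interaction types unwinds in the same way, using precisely the weights added to $K(t)$ listed in Section \ref{weight}.

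The main obstacle will be the bookkeeping for the type 4A, 5A, 5B (and symmetric 9A, 10A, 10B) interactions, in which several outgoing shocks appear or rearrange and the weight added to $Q$ is a more complicated ratio such as $\tfrac{(1 - C_0(w_m - w_l))(1 - C_0(w_r - w_m))}{1 - C_0(w_m' - w_l)}$. Verifying that this ratio cancels the net multiplicative change in $\prod_i \xi_i$ to the right of $\bar x$ is a finite but delicate calculation in each case, and also requires that the intermediate outgoing shock strengths actually lie in the regimes covered by Proposition \ref{shockstability} so the labels ``big'' and ``small'' propagate consistently. Trivial and impossible interactions need only a cursory check, as no strengths or labels change and $a$ is simply unchanged.
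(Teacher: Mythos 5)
Your proposal is correct and follows essentially the same route as the paper: the jump conditions \eqref{abigjump}--\eqref{asmalljump} read off from the definition of $\xi_i$, the bounds \eqref{abounded} obtained by bounding $C_1^{L}$, $Q$, the small-shock product (via Lemma \ref{productlemma} and the BV bound $V$), and the finitely many big-shock factors separately, and \eqref{amonotone} verified interaction-by-interaction with $L$ and $Q$ chosen exactly to cancel the change in $\prod_i\xi_i$ on one side of the interaction point. Your two worked cases (types 1 and 3A) match the paper's computation, and the remaining cases you defer are handled in the paper by the same finite bookkeeping.
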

\begin{proof}
Firstly, we show \eqref{abounded}. For the lower bound, using Lemmas \ref{Lproperties} and \ref{Qproperties}, we find
\begin{align*}
a(t,x) \geq C_1^\Lambda k\prod_{i:\text{shock}}\xi_i(t,x).
\end{align*}
Now, when considering contributions to the weight due to shocks at time $t$, we need only consider weights corresponding to $1$-shocks for the lower bound. Using the fact that $||w_\delta(t,\cdot)||_{BV(\R)} \leq V$, we see that there can be maximally $\ceil{\frac{2V}{\eps}}$ big $1$-shocks in the solution at time $t$. Further, by Lemma \ref{productlemma}
\begin{align*}
\prod_{i:\text{small shock at time $t$}}\xi_i \geq \exp(-2V).
\end{align*}
Combining all of the contributions, we see
\begin{align*}
a(t,x) \geq C_1^{(\Lambda+\ceil{\frac{2V}{\eps}})}k\exp(-2V).
\end{align*}
For the upper bound, using the same logic and Lemma \ref{productlemma} gives
\begin{align*}
a(t,x) \leq \frac{\exp(V)}{C_1^{\ceil{\frac{2V}{\eps}}}}.
\end{align*}
This grants \eqref{abounded}. 
\par Next, we prove \eqref{amonotone}. The property \eqref{amonotone} obviously holds at times with trivial wave interactions, so it suffices to check each non-trivial wave interaction individually and show the property \eqref{amonotone} holds. For brevity, we only show it for interactions of types 1 to 5 and 9B (the remaining interactions of types 6 to 10 are easy to check because $L$ and $Q$ do not change at times with these interactions). Assume there is a non-trivial interaction at the point $(t^*,x^*)$. Then, we have the following.
\begin{enumerate}
    \item Assume the interaction is type 1. Then, $L(t+)=L(t-)+1$, so for $x < x^*$, $a(t+,x)=C_1a(t-,x)<a(t-,x)$ and for $x > x^*, a(t+,x)=a(t-,x)$.
    \item Assume the interaction is type 2. Then, $Q(t+)=(1-C_0(w_r-w_m))Q(t-)< Q(t-)$, so for $x < x^*$, $a(t+,x)< a(t-,x)$ and for $x > x^*, a(t+,x)=a(t-,x)$.
    \item Assume the interaction is of type 3A. Then, $Q(t+)=(\frac{1}{1+C_0(w_m'-w_r)})Q(t-)< Q(t-)$, so for $x < x^*$, $a(t+,x)<a(t-,x)$ and for $x > x^*, a(t+,x)=a(t-,x)$.
    \item Assume the interaction is of type 3B. Then, $L(t+)=L(t-)+1$, so for $x < x^*$, $a(t+,x)=C_1a(t-,x)<a(t-,x)$ and for $x > x^*$, $a(t+,x)=(1-C_0(w_m'-w_l))(1+C_0(w_m'-w_r))a(t-,x) <a(t-,x)$.
    \item Assume the interaction is of type 4A. Then
    \[Q(t+)=\min{(\frac{1-C_0(w_m-w_l)}{(1-C_0(w_m'-w_l))(1+C_0(w_m'-w_r))},1)}Q(t-) \leq Q(t-),\]
    so for $x<x^*$, $a(t+,x) \leq a(t-,x)$ and for $x > x^*$, $a(t+,x)=a(t-,x)$.
    \item Assume the interaction is of type 4B. Then, for $x < x^*$, $a(t+,x)=a(t-,x)$ and for $x > x^*$, 
    \begin{align*}a(t+,x)&=C_1a(t+,x^*-)=C_1a(t-,x^*-) \\
    &=\frac{C_1}{(1-C_0(w_m-w_l))(1-C_0(w_r-w_m))}a(t+,x) <a(t+,x).\end{align*}
    \item Assume the interaction is of type 5A. Then
    \[Q(t+)=\frac{1-C_0(w_m-w_l)}{(1-C_0(w_m'-w_l))(1+C_0(w_m'-w_r))}Q(t-)<Q(t-),\]
    so for $x<x^*, a(t+,x)<a(t-,x)$ and for $x > x^*$, $a(t+,x)=a(t-,x)$.
    \item Assume the interaction is of type 5B. Then, $Q(t+)=\frac{1-C_0(w_m-w_l)}{1+C_0(w_m'-w_r)}Q(t-)<Q(t-)$, so for $x<x^*, a(t+,x)<a(t-,x)$ and for $x > x^*$, $a(t+,x)=a(t-,x)$.
    \item Assume the interaction of of type 9B. Then, $L(t+)=L(t-)+1$, so for $x < x^*$, $a(t+,x)=C_1a(t-,x)<a(t-,x)$ and for $x > x^*, a(t+,x)=\frac{1}{C_1}a(t+,x^*-)=a(t-,x^*-) <a(t-,x)$.
\end{enumerate}
\par Finally, \eqref{abigjump} and \eqref{asmalljump} are satisfied by the definition of the weights $\xi_i$ as the coefficients $L$ and $Q$ remain constant in space for a fixed time.
\end{proof}

\section{Proof of Proposition \ref{mainprop}}\label{final}
In this section, we give the proof of Proposition \ref{mainprop}. We start with a lemma that will provide the framework to stitch together the relative entropy stability between wave interactions. The proof exactly follows that of Lemma 2.5 in \cite{MR3954680}, so we omit it.
\begin{lemma}[Lemma 2.5 in \cite{MR3954680}]\label{stitchinglemma}
Let $u \in L^\infty(\R^+ \times \R)$ satisfy \eqref{isothermal}\eqref{entropyinequality} with initial data $u^0$. Further, assume that $u$ verifies the Strong Trace property (Definition \ref{strongtrace}). Then, for all $v \in \statespace$, and all $c,d \in \R$ with $c < d$, the approximate left- and right-hand limits:
\begin{align*}
    \aplim_{t \to t_0^\pm}\int_c^d\eta(u(t,x)|v)\diff x,
\end{align*}
exist for all $t_0 \in (0,\infty)$ and verify:
\begin{align*}
\aplim_{t \to t_0^-}\int_c^d\eta(u(t,x)|v)\diff x \geq \aplim_{t \to t_0^+}\int_c^d\eta(u(t,x)|v)\diff x.
\end{align*}
Furthermore, the approximate right-hand limit exists at $t_0=0$ and verifies:
\begin{align*}
\int_c^d\eta(u^0(x)|v)\diff x \geq \aplim_{t \to t_0^+}\int_c^d\eta(u(t,x)|v)\diff x.
\end{align*}
\end{lemma}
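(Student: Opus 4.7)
The plan is to show that $F(t) := \int_c^d \eta(u(t,x)|v)\,dx$ equals, after modification on a null set, the sum of a non-increasing function and a Lipschitz function; existence of the one-sided approximate limits and the monotonicity inequality then follow immediately.

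First, since $v \in \statespace$ is a fixed constant, multiplying the weak formulation $\partial_t u + \partial_x f(u) = 0$ by $\nabla\eta(v)$ and subtracting from the entropy inequality $\partial_t \eta(u)+\partial_x q(u)\leq 0$ yields the distributional relative entropy inequality
\begin{align*}
\int_0^\infty\!\!\int_\R \bigl[\eta(u|v)\partial_t\phi + q(u;v)\partial_x\phi\bigr]\,dx\,dt + \int_\R \eta(u^0|v)\phi(0,\cdot)\,dx \geq 0
\end{align*}
for all non-negative $\phi \in C_c^\infty([0,\infty)\times\R)$. I would test this against $\phi(t,x)=\theta(t)\psi_\epsilon(x)$, with $\theta\in C_c^\infty((0,\infty))$ non-negative and $\psi_\epsilon$ a smooth tent approximating $\ind_{[c,d]}$ whose derivative equals $1/\epsilon$ on $(c,c+\epsilon)$ and $-1/\epsilon$ on $(d-\epsilon,d)$. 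The Strong Trace property (Definition \ref{strongtrace}) applied to the vertical Lipschitz curves $t\mapsto c$ and $t\mapsto d$ upgrades the resulting spatial averages of $q(u;v)$ to $L^1_{\text{loc}}(\R^+)$ convergence towards $q(u(\cdot,c+);v)$ and $q(u(\cdot,d-);v)$, while dominated convergence controls the entropy term. With $G(t):=q(u(t,c+);v)-q(u(t,d-);v) \in L^\infty(\R^+)$, the limit reads
\begin{align*}
\int_0^\infty \theta'(t)F(t)\,dt + \int_0^\infty \theta(t)G(t)\,dt \geq 0,\qquad \forall\,\theta\in C_c^\infty((0,\infty)),\ \theta\geq 0,
\end{align*}
that is, $F' \leq G$ in $\mathcal{D}'((0,\infty))$.

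Set $H(t):=\int_0^t G(s)\,ds$, which is Lipschitz. Then $(F-H)'\leq 0$ distributionally, so by the classical correspondence between non-positive distributional derivatives and non-increasing functions $F-H$ admits a non-increasing representative $\Phi^*$ on $(0,\infty)$. Such a $\Phi^*$ has classical one-sided limits at every $t_0 > 0$ with $\Phi^*(t_0^-)\geq \Phi^*(t_0^+)$. Because approximate limits depend only on equivalence classes modulo null sets and $H$ is continuous,
\begin{align*}
\aplim_{t\to t_0^-}F(t) = \Phi^*(t_0^-)+H(t_0) \geq \Phi^*(t_0^+)+H(t_0) = \aplim_{t\to t_0^+}F(t),
\end{align*}
which is the desired inequality. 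For the initial time, retaining the initial-data boundary term and testing against $\phi(t,x)=\theta_\epsilon(t)\psi_\epsilon(x)$ with $\theta_\epsilon(t)=\max(0,1-t/\epsilon)$ (so $\theta_\epsilon(0)=1$) produces, after the spatial limit,
\begin{align*}
\tfrac{1}{\epsilon}\int_0^\epsilon F(t)\,dt \leq \int_c^d \eta(u^0|v)\,dx + O(\epsilon);
\end{align*}
since $F-H$ has a non-increasing representative, these averages converge as $\epsilon\to 0^+$ to $\aplim_{t\to 0^+}F(t)$, giving the initial inequality.

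The main obstacle is justifying the passage to the limit in the boundary flux term: one must upgrade mere pointwise a.e.\ convergence of the averages $\epsilon^{-1}\int_c^{c+\epsilon} q(u(\cdot,x);v)\,dx$ to convergence in $L^1_{\text{loc}}(\R^+)$ so that they may be paired against $\theta$. This is exactly what Definition \ref{strongtrace} supplies on each Lipschitz curve, and it is the one step where the Strong Trace hypothesis, rather than mere essential boundedness, is indispensable. Everything after that is standard distributional analysis of a distribution with non-positive derivative.
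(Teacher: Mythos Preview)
Your proof is correct and follows the standard approach: since the paper omits the proof entirely and defers to Lemma~2.5 of \cite{MR3954680}, there is nothing here to compare against beyond noting that your argument---deriving $F'\leq G$ distributionally via the Strong Trace property on the vertical lines $x=c,d$, then decomposing $F$ as a non-increasing function plus a Lipschitz function---is exactly the standard route such results take. The one place worth a small remark is that your use of Definition~\ref{strongtrace} to upgrade the boundary-flux averages to $L^1_{\mathrm{loc}}$ convergence relies on $q(\cdot\,;v)$ being Lipschitz on the range of $u$, which holds since $u$ is valued in the compact set $\statespace$; you might state this explicitly.
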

The next lemma will control the boundaries of the cone of information.
\begin{lemma}[Lemma 7.2 in \cite{MR4487515}]\label{qcontrolbyeta}
There exists a constant $C > 0$ such that:
\[
|q(a;b)| \leq C\eta(a|b), \text{ for any } (a,b) \in \mathcal{D} \times \mathcal{D}.
\]
\end{lemma}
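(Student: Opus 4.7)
The plan is to compare both $q(a;b)$ and $\eta(a|b)$ to $|a-b|^2$ on $\mathcal{D} \times \mathcal{D}$. Lemma \ref{quadraticentropy} already supplies the lower bound $\eta(a|b) \geq c^*|a-b|^2$, so it suffices to prove a matching quadratic upper bound $|q(a;b)| \leq \tilde{C}|a-b|^2$ and then set $C := \tilde{C}/c^*$.

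To get that upper bound, I would expand
\begin{align*}
q(a;b) &= q(a) - q(b) - \nabla\eta(b)\cdot(f(a)-f(b)) \\
&= \int_0^1 \bigl[\nabla q(b+s(a-b)) - \nabla\eta(b)\,\nabla f(b+s(a-b))\bigr]\cdot(a-b)\,\diff s
\end{align*}
by applying the fundamental theorem of calculus to $s \mapsto q(b+s(a-b))$ and to $s \mapsto f(b+s(a-b))$. The entropy compatibility $\nabla q = \nabla\eta\,\nabla f$, recalled just after \eqref{system}, lets me rewrite the bracket as $[\nabla\eta(b+s(a-b))-\nabla\eta(b)]\,\nabla f(b+s(a-b))$, which is $O(s|a-b|)$ by one more Taylor expansion of $\nabla\eta$ about $b$. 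Integrating in $s$ then gives $|q(a;b)| \leq \tilde{C}|a-b|^2$, where $\tilde{C}$ depends only on $\|\nabla^2\eta\|_{L^\infty(\mathcal{D})}$ and $\|\nabla f\|_{L^\infty(\mathcal{D})}$.

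The only point needing attention is that those sup norms be finite. For the isothermal entropy and flux, $\eta(\tau,v)=v^2/2-\ln\tau$ and $f(\tau,v)=(-v,1/\tau)$, every derivative is a smooth function on $\R^+\times\R$, and the only way to lose boundedness is to let $\tau\to 0$. Since $\mathcal{D}$ is a compact subset of $\R^+\times\R$, it is automatically bounded away from $\{\tau=0\}$, which is precisely why $\mathcal{D}$ is built the way it is in Lemma \ref{wellposed}. There is no substantive obstacle beyond this bookkeeping, so the argument reduces to the two Taylor expansions above followed by an application of Lemma \ref{quadraticentropy}.
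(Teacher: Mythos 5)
Your proposal is correct and follows essentially the same route as the paper: both reduce the claim to the quadratic bound $|q(a;b)|\leq \tilde{C}|a-b|^2$ and then invoke Lemma \ref{quadraticentropy}; the paper gets the quadratic bound by observing $q(b;b)=\partial_1 q(b;b)=0$ with $q''\in C^0(\mathcal{D})$, while your fundamental-theorem-of-calculus computation with the compatibility relation $\nabla q=\nabla\eta\,\nabla f$ is just the explicit verification of that same second-order vanishing.
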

\begin{proof}
We have $q(b;b)=\partial_1q(b;b)=0$ for any $b \in \mathcal{D}$, so by Lemma \ref{quadraticentropy} and $q'' \in C^0(\mathcal{D})$, there exists a constant $C$ such that:
\[
|q(a;b)| \leq C|a-b|^2 \leq \frac{C}{c^*}\eta(a|b).
\]
\end{proof}
Finally, we give the proof of Proposition \ref{mainprop}. Fix an initial data $u^0$ verifying the hypotheses, which fixes the domain $\mathcal{D}$. Let $0<\eps<\frac{1}{2}$ be sufficiently small so that Proposition \ref{shockstability} is verified. Then, fix the value $l$ to be greater than the maximal speed $\hat{\lambda}$ from Proposition \ref{shockstability} and the constant $C$ from Lemma \ref{qcontrolbyeta}. Now, consider the family of solutions $\psi_\delta$ constructed by the modified front tracking scheme. Fix $T,R>0$, and $p \in \mathbb{N}$. Take $\delta$ sufficiently small so that
\begin{align}\label{initdataapprox}
    ||u^0-\psi_\delta(0,\cdot)||_{L^2([-R,R])} \leq \frac{1}{p}.
\end{align}
For convenience, denote by $\psi$ the associated front tracking solution $\psi_\delta$. In particular, it verifies Lemma \ref{wellposed}. This immediately grants the first two properties in Proposition \ref{mainprop}. So, we need only show the $L^2$-stability. 
\par Consider two successive interaction times $t_j < t_{j+1}$ in the modified front tracking scheme that produced $\psi$. Let the curves of discontinuity between two times $t_j, t_{j+1}$ be defined as $h_1, ..., h_N$, where $N \in \N$, so that:
\begin{align*}
h_1(t) < ... < h_N(t),
\end{align*}
for all $t \in (t_j, t_{j+1})$. Define the boundaries of the cone of information:
\begin{align*}
    &h_0(t)=-R+lt, \\
    &h_{N+1}(t)=R-lt.
\end{align*}
Crucially there are no interactions between waves in $\psi$ and the cone of information. Further, for any $t \in [t_j, t_{j+1}]$, note that on $Q:=\{(r,x)|t_j < r < t, h_i(r) < x < h_{i+1}(r)\}$, the functions $\psi(r,x)$ and $a(r,x)$ are constant. Integrating \eqref{relativeentropic} on $Q$, and using Definition \ref{strongtrace}, we obtain for any $u \in \weak$
\begin{align*}
&\aplim_{s \to t_-}\int_{h_i(t)}^{h_{i+1}(t)}a(t-,x)\eta(u(s,x)|\psi(t,x))\diff x \\
&\leq \aplim_{s \to t_j^+}\int_{h_i(t_j)}^{h_{i+1}(t_j)}a(t_j+,x)\eta(u(s,x)|\psi(t_j,x))\diff x +\int_{t_j}^t(F_i^+(r)-F_{i+1}^-(r))\diff r,
\end{align*}
where
\begin{align*}
&F_i^+(r)=a(r,h_i(r)+)\left(q(u(r,h_i(r)+);\psi(r,h_i(r)+))-\dot{h}_i(r)\eta(u(r,h_i(r)+)|\psi(r,h_i(r)+))\right), \\
&F_i^-(r)=a(r,h_i(r)-)\left(q(u(r,h_i(r)-);\psi(r,h_i(r)-))-\dot{h}_i(r)\eta(u(r,h_i(r)-)|\psi(r,h_i(r)-))\right).
\end{align*}
Summing in $i$, and combining terms corresponding to $i$ in one sum and terms corresponding to $i+1$ in another, we obtain
\begin{align*}
&\aplim_{s \to t_-}\int_{-R+vt}^{R-vt}a(t-,x)\eta(u(s,x)|\psi(t,x))\diff x \\
&\leq \aplim_{s \to t_j^+}\int_{R+vt_j}^{R-vt_j}a(t_j+,x)\eta(u(s,x)|\psi(t_j,x))\diff x +\sum_{i=1}^N\int_{t_j}^t(F_i^+(r)-F_{i+1}^-(r))\diff r,
\end{align*}
where we have used that $F_0^+ \leq 0$ and $F_{N+1}^- \geq 0$ due to Lemma \ref{qcontrolbyeta}, the definition of $l$, and $\dot{h}_0=-l=-\dot{h}_{N+1}$.
\par We decompose the second term on the right-hand side into two sums, one corresponding to shocks and one corresponding to rarefactions. Due to Proposition \ref{shockstability} and \eqref{asmalljump}, \eqref{abigjump}, for any $i$ corresponding to a shock
\begin{align*}
F_i^+(r)-F_i^-(r) \leq 0, \text{ for almost every } t_j < r < t.
\end{align*}
Denote $\mathcal{R}$ to be the set of $i$ corresponding to rarefactions. Then, for any $i \in \mathcal{R}$, $a(r,h_i(r)+)=a(r,h_i(r)-)$. From Proposition \ref{rarefactionstability}, and by taking $\delta$ sufficiently small so that $\kappa \leq \frac{1}{\Theta \overline{C} VpT}$ (this is possible due to the uniform continuity of $(w,v) \mapsto \exp(w)$ and $(\tau,v) \mapsto \frac{1}{\tau}$ on compact subsets of $\R^+ \times \R$, and the structure of the rarefaction curves):
\[
\sum_{i \in \mathcal{R}}\int_{t_j}^t(F_i^+(r)-F_i^-(r))\diff r \leq \Theta \overline{C} \kappa (t-t_j)\sum_{i \in \mathcal{R}}\sigma_i \leq \frac{(t-t_j)}{pT}.
\]
In total, we find:
\begin{align*}
&\aplim_{s \to t_-}\int_{-R+vt}^{R-vt}a(t-,x)\eta(u(s,x)|\psi(t,x))\diff x \\
&\leq \aplim_{s \to t_j^+}\int_{R+vt_j}^{R-vt_j}a(t_j+,x)\eta(u(s,x)|\psi(t_j,x))\diff x +\frac{(t-t_j)}{pT}.
\end{align*}
Consider any $0 < t < T$, and denote $0=t_0 < t_1, ..., t_J<t_{J+1}=t$ the times of wave interactions in the interval $[0,t]$. Lemma \ref{stitchinglemma} and \eqref{amonotone} grant:
\begin{align*}
&\int_{-R+lt}^{R-lt}a(t,x)\eta(u(t,x)|\psi(t,x))\diff x -\int_{-R}^Ra(0,x)\eta(u(0,x)|\psi(0,x))\diff x \\
&\leq \aplim_{s \to t^+}\int_{-R+lt}^{R-lt}a(t,x)\eta(u(t,x)|\psi(t,x))\diff x -\int_{-R}^Ra(0,x)\eta(u(0,x)|\psi(0,x))\diff x \\
& \leq \sum_{j=1}^{J+1}\Bigg(\aplim_{s \to t_j^+}\int_{-R+lt_j}^{R-lt_j}a(t_j-,x)\eta(u(t,x)|\psi(t,x))\diff x \\
&\phantom{{}\leq \sum_{j=1}^{J+1}(}-\aplim_{s \to t_{j-1}^+}\int_{-R+lt_{j-1}}^{R-lt_{j-1}}a(t_{j-1}-,x)\eta(u(0,x)|\psi(0,x))\diff x\Bigg) \\
& \leq \sum_{j=1}^{J+1}\Bigg(\aplim_{s \to t_j^-}\int_{-R+lt_j}^{R-lt_j}a(t_j-,x)\eta(u(t,x)|\psi(t,x))\diff x \\
&\phantom{{}\leq \sum_{j=1}^{J+1}(}-\aplim_{s \to t_{j-1}^+}\int_{-R+lt_{j-1}}^{R-lt_{j-1}}a(t_{j-1}-,x)\eta(u(0,x)|\psi(0,x))\diff x\Bigg) \\
& \leq \sum_{j=1}^{J+1}\Bigg(\aplim_{s \to t_j^-}\int_{-R+lt_j}^{R-lt_j}a(t_j-,x)\eta(u(t,x)|\psi(t,x))\diff x \\
&\phantom{{}\leq \sum_{j=1}^{J+1}(}-\aplim_{s \to t_{j-1}^+}\int_{-R+lt_{j-1}}^{R-lt_{j-1}}a(t_{j-1}+,x)\eta(u(0,x)|\psi(0,x))\diff x\Bigg) \\
&\leq \sum_{j=1}^{J+1}\frac{(t_j-t_{j-1})}{pT} \leq \frac{1}{p}.
\end{align*}
Using  Lemma \ref{quadraticentropy}, \eqref{abounded}, and \eqref{initdataapprox}, we find:
\[
\overline{c}c^*||\psi(t,\cdot)-u(t,\cdot)||_{L^2([-R+vt, R-vt])}^2 \leq 2\overline{C}c^{**}||u^0-u(0,\cdot)||_{L^2([-R,R])}^2+\frac{1+2\overline{C}c^{**}}{p},
\]
where $\overline{c},\overline{C}$ are the constants from Proposition \ref{aproperties} and $c^*,c^{**}$ are the constants from Lemma \ref{quadraticentropy}. Taking $p$ sufficiently large so that $\frac{1+2\overline{C}c^{**}}{\overline{c}c^*p} < \frac{1}{m}$ grants the result. 

\appendix

\section{Proof of Theorem \ref{maineulerian}.}\label{appendixeulerian}
For the sake of completeness, we give here a proof of Theorem \ref{maineulerian}, assuming that one is well-acquainted with the proof of Proposition \ref{mainprop}. Let $U_n=(\rho_n, \rho_nv_n)$ be a sequence in $\weak^E$ such that $U_n^0 \to U$ in $L^2(\R)$, where $U=(\rho, \rho v) \in \weak^E \cap L^\infty(\R^+;BV(\R))$. By Theorem 1 in \cite{MR885816}, for each $n$ there exists a bilipschitz homeomorphism $X_n:\R^+ \times \R \to \R^+ \times \R$ defined by $X_n(t,y)=(t, \phi_n(t,y))$ such that
\begin{align*}
    u_n(t,x):=(\frac{1}{\rho_n(t,\phi_n^{-1}(t,x))}, v_n(t,\phi_n^{-1}(t,x)))
\end{align*}
is a weak solution to \eqref{isothermal} in $\weak$. Owing to the fact that $U_n \in \weak^E$, we have uniform bounds
\begin{align}
\tilde{c} \leq &\frac{\partial \phi_n}{\partial y} \leq \tilde{C}, \label{deriv1} \\
|&\frac{\partial \phi_n}{\partial t}| \leq \tilde{C}\tilde{M}. \label{deriv2}
\end{align}
Further, there exists a change of variables $X(t,y)=(t,\phi(t,y))$ such that the same holds for $U$ and its associated Lagrangian solution 
\begin{align*}
    u(t,x):=(\frac{1}{\rho(t,\phi^{-1}(t,x))}, v(t,\phi^{-1}(t,x)))).
\end{align*}
One may also check that $u \in \weak \cap L^\infty(\R^+;BV(\R))$, and $u_n^0 \to u^0$ in $L^2(\R)$. So, fix $T,R >0$. By examining the proof of Proposition \ref{mainprop}, we see that for any $R'>0$, we may obtain for any $n>0$, $0 < t < T$, and $m > 0$, there exists a function $\psi_n \in L^\infty([0,T];L^\infty(\R) \cap BV(\R))$ such that
\begin{align*}
\int_{-R'+lt}^{R'-lt}\eta(u_n(t,x)|\psi_n(t,x))\diff x \leq \frac{1}{m}.
\end{align*}
Further, there is a uniform bound on the $L^\infty$ norm and total variation in $n$, and the speed of waves in the functions $\psi_n$ is uniformly bounded by $\hat{\lambda}$. Now, using Proposition 6.1 in \cite{MR3322780}, this translates exactly to
\begin{align*}
\int_{-R}^{R}\tilde{\eta}(U_n(t,y)|\zeta_n(t,y)) \diff y \leq \frac{1}{m},
\end{align*}
where $\zeta_n(t,y)=\psi_n(t,\phi_n(t,y))$, by taking $R'$ sufficiently large. Using Lemma \ref{quadraticentropy}, this implies
\begin{align*}
||U_n(t,\cdot)-\zeta_n(t,\cdot)||_{L^2([-R,R])} \leq \frac{1}{n},
\end{align*}
by taking $m$ sufficiently large. It remains to show that the sequence $\{\zeta_n\}_{n=1}^\infty$ verifies the hypotheses of Theorem \ref{compactness}. Firstly, the uniform BV bound follows from the fact that the functions $\psi_n$ are uniformly bounded in $BV$. Finally, the proof that the functions $\zeta_n$ are Lipschitz in time in $L^1$ follows from the following lemma, which says that the curves of discontinuity in $\zeta_n$, which are the images of curves of discontinuity in $\psi_n$ under $X_n^{-1}$, retain the finite speed of propagation property.

\begin{lemma}\label{finitespeed}
There exists a constant $\hat{\Lambda}$ such that the following holds. Let $h:[a,b] \to \R$ be a segment of a curve of discontinuity in $\zeta_n$. Let $0 < a < s < t < b <T$, and let $(s, y_1), (t, y_2)$ be two points on $h$. Then, we have
\begin{align*}
\frac{|y_2-y_1|}{|t-s|} \leq \hat{\Lambda}.
\end{align*}
\end{lemma}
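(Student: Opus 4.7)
The plan is to exploit the fact that the curves of discontinuity of $\zeta_n$ are simply the preimages under $X_n$ of the curves of discontinuity of $\psi_n$, and then to convert the uniform speed bound $\hat{\lambda}$ available in the Lagrangian frame into an Eulerian speed bound using the bilipschitz bounds \eqref{deriv1} and \eqref{deriv2} on $\phi_n$.

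Concretely, if $h:[a,b] \to \R$ is a segment of a discontinuity curve of $\zeta_n = \psi_n \circ X_n$ on which no wave interactions occur, then there exists a corresponding discontinuity curve $x_i$ of $\psi_n$ with $\phi_n(r, h(r)) = x_i(r)$ for all $r \in [a,b]$. From Proposition \ref{shockstability} (and the trivial bound $1/\tau$ on the speed of rarefaction fronts in the modified front tracking scheme from Section \ref{mfts}), the Lagrangian curve $x_i$ is Lipschitz with $|\dot{x}_i(r)| \leq \hat{\lambda}$ uniformly.

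Given two points $(s, y_1)$ and $(t, y_2)$ on the graph of $h$, with $s < t$ and (without loss of generality) $y_2 \geq y_1$, I will write
\begin{align*}
x_i(t) - x_i(s) = \phi_n(t, y_2) - \phi_n(s, y_1) = \bigl[\phi_n(t, y_2) - \phi_n(t, y_1)\bigr] + \bigl[\phi_n(t, y_1) - \phi_n(s, y_1)\bigr].
\end{align*}
The lower bound in \eqref{deriv1} gives $\phi_n(t, y_2) - \phi_n(t, y_1) \geq \tilde{c}(y_2 - y_1)$, while \eqref{deriv2} gives $|\phi_n(t, y_1) - \phi_n(s, y_1)| \leq \tilde{C}\tilde{M}(t-s)$. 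Combining these with $|x_i(t) - x_i(s)| \leq \hat{\lambda}(t-s)$ yields
\begin{align*}
\tilde{c}\,|y_2 - y_1| \leq \bigl(\hat{\lambda} + \tilde{C}\tilde{M}\bigr)(t-s),
\end{align*}
so the claim holds with $\hat{\Lambda} := (\hat{\lambda} + \tilde{C}\tilde{M})/\tilde{c}$, which depends only on the uniform constants $\tilde{c}, \tilde{C}, \tilde{M}$ in the definition of $\statespace^E$ and the fixed Lagrangian speed bound $\hat{\lambda}$.

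There is no real obstacle here: the only point requiring care is that $\phi_n$ is merely bilipschitz, not smooth, so one cannot formally differentiate the identity $\phi_n(t, h(t)) = x_i(t)$ and solve for $\dot{h}$. The finite-difference argument above avoids this issue and uses only the a.e.\ bounds \eqref{deriv1}--\eqref{deriv2}, which hold uniformly in $n$ because $U_n \in \weak^E$ has density and velocity uniformly bounded and bounded away from vacuum.
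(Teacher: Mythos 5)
Your proposal is correct and follows essentially the same strategy as the paper: a finite-difference (triangle-inequality) decomposition through one auxiliary point, converting the Lagrangian speed bound $\hat{\lambda}$ into an Eulerian one via the bilipschitz bounds \eqref{deriv1}--\eqref{deriv2}. The paper routes the decomposition through the preimage of $(s,x_2)$ while you route it through $(t,y_1)$, but this is an immaterial difference and both yield a constant of the form $(\hat{\lambda}+\tilde{C}\tilde{M})/\tilde{c}$ up to harmless factors.
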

\begin{proof}
Let $(s,x_1)$ and $(t,x_2)$ be the images of $(s,y_1)$ and $(t,y_2)$ under $X_n$ respectively. Note that these lie on $X_n(h(t))$, which is a curve of discontinuity for the function $\psi_n$. Firstly, consider the auxiliary point $(s,x_2)$. Let $(s, y_3)$ be the preimage of $(s,x_2)$ under $X_n$. Then, due to \eqref{deriv1}, we see
\begin{align}\label{apppart1}
|y_1-y_3| \leq \frac{2}{\tilde{c}}|x_1-x_2|. 
\end{align}
Now, consider the point $(t,y_3)$, and let $(t, x_4)$ be its image under $X_n$. Due to \eqref{deriv2}, we see
\begin{align}
|x_4-x_2| \leq \tilde{C}\tilde{M}|t-s|.
\end{align}
Using $\eqref{deriv1}$ again gives
\begin{align}\label{apppart2}
|y_3-y_2| \leq \frac{2}{\tilde{c}}|x_4-x_2| \leq 2\frac{\tilde{C}\tilde{M}}{\tilde{c}}|t-s|.
\end{align}
Together, \eqref{apppart1} and \eqref{apppart2} give
\begin{align*}
|y_1-y_2| \leq  \frac{2}{\tilde{c}}|x_1-x_2|+2\frac{\tilde{C}\tilde{M}}{\tilde{c}}|t-s|.
\end{align*}
Dividing both sides by $|t-s|$ gives the result, as $\frac{|x_1-x_2|}{|t-s|} \leq \hat{\lambda}$, the maximal speed of propagation in the $(t,x)$-plane.
\end{proof}
Using Lemma \ref{finitespeed}, the uniform Lipschitz in time bound follows exactly as in Section 7.4 in \cite{MR1816648}, or the proof of Proposition 2.2 in \cite{MR3071091}. From this point, the proof may be completed exactly as the proof of Theorem \ref{main}, as the sequence $\{\zeta_n\}_{n=1}^\infty$ verifies the hypotheses of Theorem \ref{compactness}.

\section{Proof of Lemma \ref{wellposed}: Well-posedness of the scheme}\label{appendix}
In this section, we prove Lemma \ref{wellposed}. The first part of the proof is similar to the proof of $L^\infty$ and $BV$ estimates for approximate solutions to \eqref{isothermal} generated by the Glimm scheme stated in Theorem 7.2 in \cite{MR1942468}. Let $w^0_\delta=-ln(\tau^0_\delta)$. As the initial density is bounded away from vacuum and infinity, and is BV, we have $-D \leq w^0_\delta \leq D$ for some $D>0$, and $||w^0_\delta||_{BV(\R)} < \infty$. Then, due to the $L^\infty$ bound on $w^0_\delta$ and the structure of the wave curves, solving the Riemann problems at $t=0$ grants:
\begin{align*}
    ||w_\delta(0+,\cdot)||_{BV(\R)} \leq C_1||w^0_\delta||_{BV(\R)}.
\end{align*}
Then, using Lemma \ref{tvd}:
\begin{align*}
    ||w_\delta(t,\cdot)||_{BV(\R)} \leq ||w_\delta(0+,\cdot)||_{BV(\R)} \leq C_1||w^0_\delta||_{BV(\R)}.
\end{align*}
Thus, there exists $K_1 > 0$ such that $|w_\delta(t,x)| \leq K_1$. Further, one has
\begin{align*}
    ||\tau_\delta(t,\cdot)||_{BV(\R)} \leq C_2||w_\delta(t,\cdot)||_{BV(\R)} \leq C_2C_1||w^0_\delta||_{BV(\R)},
\end{align*}
and
\begin{align*}
    ||v_\delta(t,\cdot)||_{BV(\R)} \leq C_3||w_\delta(t,\cdot)||_{BV(\R)} \leq C_3C_1||w^0_\delta||_{BV(\R)},
\end{align*}
from the equations of the elementary wave curves \eqref{wavecurves} and $|W'(s)| \leq C_3$ for $|s| \leq 2K_1$. Therefore:
\begin{align*}
    &|v_\delta(t,x)| \leq K_2, \indent (t,x) \in [0,T] \times \R, \\
    &||\psi_\delta(t,\cdot)||_{BV(\R)} \leq C_4||w^0_\delta||_{BV(\R)} \leq C_5||\psi^0_\delta||_{BV(\R)} \leq C_6||u^0||_{BV(\R)}.
\end{align*}
We obtain that $\psi_\delta(t,\cdot) \in \mathcal{D}$ for any $x \in \R$, where we may define
\begin{align*}
\mathcal{D}:=\{(\tau, v) \in \R^+ \times \R): \exp(-K_1) \leq \tau \leq \exp(K_1), |v| \leq K_2\}.
\end{align*}
Further, the Lipschitz bound in time in $L^1$ follows from the fact that the wave speeds are bounded by $\hat{\lambda}$ (a proof of this may be found in Section 7.4 in \cite{MR1816648}, or Proposition 2.2 in \cite{MR3071091}).
\par Finally, we need to show that the number of waves in $[0,T] \times \R$ is finite. It suffices to show that, for a fixed $\delta > 0$, there is a finite number of interactions where more than one outgoing wave of each family is generated (however, the number may blow up as $\delta \to 0$). Clearly, the number of interactions at $t=0$ is finite. After that, we need only show that the number of possible interactions of types 1, 2, 4A, 4B, 6, 7, 9A, and 9B that create more than one rarefaction at times $t=t^* > 0$ is finite. Without loss of generality, we will assume it is of type 1, 2, 4A, or 4B. 
\par Firstly, assume that the interaction is of type 1. Then, whenever a type 1 interaction occurs, the number of big shocks goes down by 1. However, the number of times a big shock can be created is finite. Indeed, if a big shock is created, in either a type 4B or 9B interaction, $\Delta S(t^*)\leq -\eps$. By Lemma \ref{Sdissipationbd}, we conclude that this can only happen at most $\ceil{\frac{K_0+V}{\eps}}$ times. Thus, the number of type 1 interactions is bounded by $\ceil{\frac{V}{\eps}}+\ceil{\frac{K_0+V}{\eps}}$.
\par Next, assume the interaction is of type 2. Then, if more than one rarefaction is created, we must have $w_r-w_m' > \delta$. However, this implies $\Delta S(t^*)=-(w_r-w_m) \leq -\delta$. So, by Lemma \ref{Sdissipationbd}, this may not happen more than $\ceil{\frac{K_0+V}{\delta}}$ times.
\par Next, assume the interaction is of type 4A. Then, similar to the type 2 case, we see $\Delta S(t^*) \leq -\delta$. this may not happen more than $\ceil{\frac{K_0+V}{\delta}}$ times.
\par Finally, assume the interaction is of type 4B. Then, by Lemma \ref{Sdissipationbd}, this may not happen more than $\ceil{\frac{K_0+V}{\eps}}$ times.
\par To conclude, we use the following lemma.
\begin{lemma}[Lemma 2.5 in \cite{MR1820292}]
Let a front tracking pattern be given in $[0,T) \times \R$, made of segments of the two families. Assume that the velocities of the first family lie between constants $a_1 < a_2$, and the velocities of the second family lie between constants $b_1 < b_2$, with $a_2 < b_1$. Assume that the pattern has the following properties:
\begin{enumerate}
    \item at $t=0$ there is a finite number of waves,
    \item interactions occur only between two wave fronts,
    \item except for a finite number of interactions, there is at most one outgoing wave of each family for each interaction.
\end{enumerate}
Then, the number of waves in $[0,T) \times \R$ is finite. 
\end{lemma}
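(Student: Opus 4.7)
The goal is to deduce finiteness of the total number of waves from the hypotheses. Since each interaction creates only finitely many outgoing wave fronts (at most $2$ for a ``good'' interaction — one per family — and some finite number at each of the finitely many ``bad'' interactions), it suffices to show that the total number of interactions in $[0,T)\times\mathbb{R}$ is finite. I would proceed by induction/partitioning on the finitely many bad interactions.

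\emph{Step 1 (Reduction to good-only subintervals).} Let $0<t_1<\dots<t_K<T$ be the bad interaction times, and set $t_0=0$, $t_{K+1}=T$. It suffices to show that on each $[t_j,t_{j+1})$ only finitely many (necessarily good) interactions occur. By hypothesis~(1) and the finiteness of outgoing waves at each bad interaction, the number $N_j$ of wave segments present at $t_j^+$ is finite. On $[t_j,t_{j+1})$, every interaction has at most one outgoing wave of each family, so at most $2$ outgoing segments. Since $2$ segments are destroyed and at most $2$ are created at each such interaction, the count of simultaneously present segments is non-increasing on $[t_j,t_{j+1})$, hence bounded by $N_j$.

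\emph{Step 2 (Potential functional).} On each subinterval I would introduce a Glimm-style potential
\[
Q(t) \;=\; \#\bigl\{\text{ordered pairs } (\alpha,\beta) \text{ present at }t : \alpha \text{ strictly left of }\beta \text{ and approaching}\bigr\},
\]
where ``approaching'' means that extrapolating the current constant velocities, $\alpha$ and $\beta$ collide at some later time. By Step~1, $Q(t_j^+)\leq\binom{N_j}{2}<\infty$. The key structural input is the strict speed gap $a_2<b_1$: a family-$1$ segment strictly to the left of a family-$2$ segment can never be approaching (the family-$2$ wave is strictly faster and to the right, so they diverge). Consequently all approaching pairs are of one of three types: same-family with the left wave strictly faster, or a family-$2$ wave strictly to the left of a family-$1$ wave.

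\emph{Step 3 (Strict decrease of $Q$).} At a good interaction between $\alpha$ (left) and $\beta$ (right), the pair $(\alpha,\beta)$ is approaching and is removed. The outgoing segments (at most one per family) emanate from the interaction point; again by $a_2<b_1$, if both exist, the outgoing family-$1$ segment lies immediately to the left of the outgoing family-$2$ segment, so the new outgoing pair is \emph{not} approaching. A case analysis over the type of the interaction (same-family or cross-family) and over each third segment $\gamma\ne\alpha,\beta$ — using that the outgoing family-$i$ speed lies in the same interval as any pre-existing family-$i$ wave — shows that the net count of approaching pairs strictly drops by at least $1$ at each good interaction. Since $Q$ is a non-negative integer, the number of good interactions on $[t_j,t_{j+1})$ is at most $Q(t_j^+)<\infty$. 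Summing over $j=0,\dots,K$ gives the total finiteness of interactions, and hence of wave segments.

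\emph{Principal difficulty.} The main obstacle is verifying the strict monotonicity of $Q$ in Step~3. A same-family interaction (e.g.\ two family-$1$ fronts merging) can produce a new family-$2$ front, which may form new approaching pairs with pre-existing family-$2$ waves, potentially offsetting the loss of the interacting pair. The speed separation $a_2<b_1$ is precisely what forces the newly-born cross-family pair to be diverging and, more subtly, controls the sign of the contribution from third-party waves $\gamma$; making this bookkeeping water-tight is the delicate part, and is where one would either invoke the explicit weighted potential of Bressan~\cite{MR1820292} or refine $Q$ (e.g.\ by weighting each pair by a factor that accounts for the effective number of cascade events it can trigger).
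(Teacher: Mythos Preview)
The paper does not prove this lemma; it is quoted verbatim from \cite{MR1820292} and invoked as a black box, so there is no ``paper's own proof'' to compare against. That said, your outline has a genuine gap in Step~3. The potential $Q$ you define (all approaching pairs) does \emph{not} strictly decrease at same-family good interactions. Concretely: if two family-$1$ fronts $\alpha,\beta$ merge and emit a family-$2$ front $\beta'$, then for every family-$1$ wave $\gamma$ to the right, the pair $(\beta',\gamma)$ is a newly created approaching cross-family pair (since $b_1>a_2$), while the pairs $(\alpha,\gamma),(\beta,\gamma)$ that are destroyed may well have been non-approaching (if $\gamma$ happened to be faster). So $Q$ can strictly increase. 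You flag this in your ``Principal difficulty'' paragraph, but the body of Step~3 asserts the strict drop as if it were established; it is not, and no choice of weights on the pairs you list will repair it without further structural input.

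A clean fix that stays within your framework: on each good-only subinterval, first observe that the counts $N_1(t),N_2(t)$ of family-$1$ and family-$2$ fronts are non-increasing (two in, at most one of each family out), and that $N_i$ drops strictly at every $i$-$i$ interaction. Hence there are at most $N_1(t_j^+)+N_2(t_j^+)$ same-family interactions on $[t_j,t_{j+1})$. Now partition once more at the same-family interaction times; on each resulting sub-subinterval only cross-family interactions occur, and for those your computation is correct: the count of cross-family approaching pairs (family-$2$ strictly left of family-$1$) drops by exactly one at each interaction and is bounded above by $N_1N_2$ at the start of the sub-subinterval. This two-layer partition gives the finiteness without needing a weighted Glimm functional. (Minor point on Step~1: to avoid circularity, frame the whole thing as a contradiction argument against a first accumulation time $t^*$; on $[0,t^*)$ only finitely many bad interactions occur, so the above applies on a tail interval.)
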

So, applying the lemma, we obtain the well-posedness.
\bibliographystyle{plain}
\bibliography{refs}
\end{document}